\def\l@subsection{\@tocline{2}{0pt}{2.5pc}{5pc}{}}
\newtheorem*{thm*}{Theorem}
\newtheorem{lem}{Lemma}[section]
\newtheorem{thm}[lem]{Theorem}
\newtheorem{cor}[lem]{Corollary}
\newtheorem{prop}[lem]{Proposition}
\theoremstyle{definition}
\newtheorem{defn}[lem]{Definition}
\newtheorem{notation}[lem]{Notation}
 \newtheorem{assumption}[lem]{Assumption}
\newenvironment{example}
  {\pushQED{\qed}\examplex}
  {\popQED\endexamplex}
\newtheorem{conjecture}[lem]{Conjecture}
\theoremstyle{remark}
\newtheorem{remark}[lem]{Remark}
\numberwithin{table}{section}
\newcommand\<{\langle}
\renewcommand\>{\rangle}
\newcommand{\PP}{\mathbb P}
\newcommand{\ZZ}{\mathbb Z}
\newcommand{\NN}{\mathbb N}
\newcommand{\ba}{\mathbf a}
\newcommand{\bb}{\mathbf b}
\newcommand{\bbd}{\mathbf d}
\newcommand{\bbx}{\mathbf x}
\newcommand{\bby}{\mathbf y}
\DeclareMathOperator{\depth}{depth}
\DeclareMathOperator{\pdim}{pdim}
\DeclareMathOperator{\reg}{reg}
\newcommand{\Pnm}{\PP^n\times\PP^m}
\def\mustata{{Musta\c{t}\v{a}}}
\newcommand*{\defeq}{\mathrel{\vcenter{\baselineskip0.5ex \lineskiplimit0pt
                     \hbox{\scriptsize.}\hbox{\scriptsize.}}}%
                     =}
 \numberwithin{equation}{section}
\definecolor{bettergreen}{RGB}{50, 170, 50}
\definecolor{betterpink}{RGB}{227, 134, 211}
\begin{document}
%%%%%%%%%%%%%%%%%%%%%%%%%%%%%%%%%%%%%%%%%%%%%%%%%%%
\vspace*{-1cm}

\title{On virtual resolutions of points in a product of projective spaces}

\author[Bailly-Hall]{Isidora Bailly-Hall}
\address{Grinnell College}
\email{baillyha@grinnell.edu}

\author[Berkesch]{Christine Berkesch}
\address{University of Minnesota}
\email{cberkesc@umn.edu}

\author[Dovgodko]{Karina Dovgodko}
\address{Columbia University}
\email{kmd2235@columbia.edu}

\author[Guan]{Sean Guan}
\address{University of California, Berkeley}
\email{seanguan@berkeley.edu}

\author[Sivakumar]{Saisudharshan Sivakumar}
\address{University of Florida}
\email{sivakumars@ufl.edu}

\author[Sun]{Jishi Sun}
\address{University of Michigan}
\email{joshsun@umich.edu}

%\date{}
\subjclass[2020]{Primary: 13D02. Secondary: 14M25, 14F06}
\keywords{Syzygies, resolutions, toric varieties, virtual resolution}

\begin{abstract}
%For JMM: 
%Free resolutions, or syzygies, with a graded structure are algebraic objects that encode many geometric properties. This correspondence lies at the heart of classical projective algebraic geometry. Virtual resolutions were recently introduced by Berkesch, Erman, and Smith to produce a similar correspondence for smooth toric varieties. We will describe two methods for producing nice virtual resolutions for a  finite sets of points in $\mathbb{P}^n \times \mathbb{P}^m$. We first extend to $\mathbb{P}^n \times \mathbb{P}^m$ a result of Harada, Nowroozi, and Van Tuyl for $\mathbb{P}^1 \times \mathbb{P}^1$ by intersecting with a sufficiently high power of one set of variables. Then, we describe an explicit virtual resolution for a set $X$ of sufficiently general position in $\mathbb{P}^1 \times \mathbb{P}^2$; this is a subcomplex of the free resolution for $X$. Along the way, we provide an explicit relationship between Betti numbers and higher difference matrices of Hilbert functions. 
%For report:
For finite sets of points in $\Pnm$, we produce short virtual resolutions, as introduced by Berkesch, Erman, and Smith~\cite{virtual-original}. 
We first intersect with a sufficiently high power of one set of variables for points in $\Pnm$ to produce a virtual resolution of length $n+m$. 
Then, we describe an explicit virtual resolution of length $3$ for a set of points in sufficiently general position in $\PP^1 \times \PP^2$, via a subcomplex of a free resolution. 
This first result generalizes to $\Pnm$ work of Harada, Nowroozi, and Van Tuyl~\cite{HNV-points} and  the second partially generalizes work of \cite{HNV-points} and Booms-Peot~\cite{booms-peot}, which were both for $\PP^1\times\PP^1$. 
Along the way, we also note an explicit relationship between Betti numbers and higher difference matrices of bigraded Hilbert functions for $\Pnm$. 
\end{abstract}
\maketitle
%\tableofcontents

\vspace*{-2mm}
%%%%%%%%%%%%%%%%%%%%%%%%%%%%%%%%
\section{Introduction}
%%%%%%%%%%%%%%%%%%%%%%%%%%%%%%%%
\label{sec:introduction}

Minimal free resolutions of the vanishing ideal of an embedded projective variety $Y$ contain important geometric information; for example, the length of such a complex is bounded below by the codimension of $Y$ and above by the dimension of the ambient projective space. 
When working over the Cox ring of a smooth projective toric variety, the virtual resolutions introduced by Berkesch, Erman, and Smith in \cite{virtual-original}, can be shorter than minimal free resolutions and encode more geometry; for one, it is possible to find such complexes with length at most  the dimension of the ambient space, see~\cites{virtual-original,FH,HHL,short-HST,reggie-thesis}. 
There is growing interest in producing families of short virtual resolutions, see~\cites{virtualJSAG,duarte-seceleanu,virtualCM,reu2018,loper,yang-monomial,booms-cobb,HNV-points,reu2019,mahrud-resDiag,booms-peot,virtual-cellular}. 
Most relevant to this article are \cites{HNV-points,booms-peot}, which study virtual resolutions of points in $\PP^1 \times \PP^1$, as we will consider finite sets of points $X \subseteq \Pnm$. 

Let $S=\Bbbk[x_0, x_1,\ldots, x_n,y_0, y_1,\ldots, y_m]$ denote the \emph{Cox ring} of $\Pnm$, where $\Bbbk$ is an algebraically closed field, and $S$ carries the multigrading 
$\deg(x_i) = (1,0)\in\ZZ^2$ and $\deg(y_j) = (0,1)\in\ZZ^2$ for all $i,j$. 
Let $\bbx = x_0, x_1, \ldots, x_n$ and $\bby = y_0, y_1, \ldots, y_m$. 
The \emph{irrelevant ideal} of $\Pnm$ in $S$ is 
\[
B:= \< x_0, x_1,\ldots, x_n \> \cap \< y_0,y_1, \ldots, y_m \> = \<\bbx\>\cap\<\bby\>.
\]
Let $X$ denote a finite set of points in $\Pnm$, and let $I_X$ denote the homogeneous vanishing ideal of $X$ in $S$. 
A graded free complex of $S$-modules 
    \[
    F_\bullet 
    \ \coloneqq \ 
    F_0\gets F_1\gets F_2\gets \cdots
    \] 
is a \emph{virtual resolution} of a finitely generated $\ZZ^2$-graded $S$-module $M$ if it satisfies: 
\begin{enumerate}
    \item For each $i>0$, there is some $t$ such that $B^tH_i(F_\bullet) = 0$, and
    \item $H_0(F_\bullet)/\Gamma_B(H_0(F_\bullet))\cong M/\Gamma_B(M)$, 
    \end{enumerate}
where $
    \Gamma_B(M)\coloneqq \{m\in M\mid B^tm = 0\text{ for some } t\in\NN\}$. 
Equivalently, $F_\bullet$ is a virtual resolution of $M$ if $\widetilde{F_\bullet}$ is a locally free resolution of $\widetilde{M}$ over $\Pnm$. 

For a finite set $X$ of points in $\PP^1\times\PP^1$, \cite{HNV-points}*{Theorem 4.2} made explicit the construction of a virtual resolution for $S/I_X$ in \cite{virtual-original}*{Theorem 4.1}. 
Our first main result generalizes this to $\Pnm$. 

\begin{thm}
\label{thm:intersectionPPnxPPm}
Let $X$ be a finite set of points in $\Pnm$ with natural first projection $\pi_1\colon \Pnm\to\PP^n$. 
Let $\ell = |\pi_1(X)|$ denote the number of unique first coordinates among the points in $X$. 
For all $t\geq \ell-1$, the minimal free resolution of $S/(I_X\cap\<\bbx\>^t)$ is a virtual resolution of $S/I_X$ of length $n+m$.
\end{thm}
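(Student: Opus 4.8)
The plan is to treat the two claims separately: that the minimal free resolution of $S/(I_X\cap\<\bbx\>^t)$ is a virtual resolution of $S/I_X$, and that its length is exactly $n+m$; only the second will use $t\ge\ell-1$. For the first, I would show that the $B$-saturation of $I_X\cap\<\bbx\>^t$ equals $I_X$. Since $B=\<x_iy_j\>$ we have $B^t\subseteq\<\bbx\>^t$, hence $\<\bbx\>^t:_SB^\infty=S$; and $I_X$ is $B$-saturated because $X$ is disjoint from the zero locus of $B$. As saturation commutes with finite intersection, $(I_X\cap\<\bbx\>^t):_SB^\infty = I_X\cap S = I_X$, so $\widetilde{S/(I_X\cap\<\bbx\>^t)}\cong\widetilde{S/I_X}$ on $\Pnm$, and therefore the sheafification of the minimal free resolution of $S/(I_X\cap\<\bbx\>^t)$ is a locally free resolution of $\widetilde{S/I_X}$, i.e.\ a virtual resolution of $S/I_X$.

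By the Auslander--Buchsbaum formula the length of that resolution equals $(n+m+2)-\depth_S S/(I_X\cap\<\bbx\>^t)$, so it remains to prove $\depth_S S/(I_X\cap\<\bbx\>^t)=2$. For the bound $\le 2$: for each $p\in X$ the prime $\frp_p\subseteq S$ of the cone over $p$ (with $\dim S/\frp_p=2$) is a minimal prime of $I_X$, and since $\<\bbx\>\not\subseteq\frp_p$ it remains minimal over $I_X\cap\<\bbx\>^t$, hence associated, so $\depth_S S/(I_X\cap\<\bbx\>^t)\le 2$. For the bound $\ge2$ — the crux — I would first observe that $\<\bbx\>^t(S/I_X)$ equals the $x$-truncation $\bigoplus_{a\ge t}(S/I_X)_{(a,b)}$, because $R_tR_{a-t}=R_a$ for $a\ge t$ (writing $R=\Bbbk[\bbx]$). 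Plugging this into the short exact sequence
\[
0\to\<\bbx\>^t(S/I_X)\to S/(I_X\cap\<\bbx\>^t)\to S/\<\bbx\>^t\to0
\]
(whose submodule $\<\bbx\>^t/(I_X\cap\<\bbx\>^t)\cong(I_X+\<\bbx\>^t)/I_X$ is $\<\bbx\>^t(S/I_X)$) and using that $S/\<\bbx\>^t$ has depth $m+1\ge2$, so that $\HH^0_\frm$ and $\HH^1_\frm$ of $S/\<\bbx\>^t$ vanish, the long exact sequence in local cohomology yields $\HH^i_\frm\bigl(S/(I_X\cap\<\bbx\>^t)\bigr)\cong\HH^i_\frm\bigl(\<\bbx\>^t(S/I_X)\bigr)$ for $i=0,1$. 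It therefore suffices to prove $\depth_S\<\bbx\>^t(S/I_X)\ge2$.

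This is where the hypothesis enters. Write $\pi_1(X)=\{q_1,\dots,q_\ell\}$, let $\frq_k\subseteq R$ be the ideal of $q_k$, let $Y_k\subseteq\PP^m$ (nonempty) be the fiber of $X$ over $q_k$ with ideal $I_{Y_k}\subseteq\Bbbk[\bby]$, and put $X_k=\{q_k\}\times Y_k$, so that $X=\bigsqcup_k X_k$, $I_X=\bigcap_k I_{X_k}$, and $I_{X_k}=\frq_kS+I_{Y_k}S$. I would show that the inclusion $S/I_X\hookrightarrow\bigoplus_{k=1}^\ell S/I_{X_k}$ is an isomorphism in every bidegree $(a,b)$ with $a\ge\ell-1$: in such degrees $(S/I_{X_k})_{(a,b)}=(R_a/(\frq_k)_a)\otimes_\Bbbk(\Bbbk[\bby]/I_{Y_k})_b$, and the evaluation map $R_a\to\bigoplus_kR_a/(\frq_k)_a$ is surjective because $\ell$ points in $\PP^n$ impose independent conditions on forms of degree $\ge\ell-1$, so a Chinese-remainder argument gives the isomorphism. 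Since $t\ge\ell-1$, this inclusion restricts to an isomorphism on the submodules of $x$-degree $\ge t$, giving
\[
\<\bbx\>^t(S/I_X)\;\cong\;\bigoplus_{k=1}^\ell\<\bbx\>^t(S/I_{X_k})\;\cong\;\bigoplus_{k=1}^\ell(S/I_{X_k})(-t,0),
\]
the last isomorphism because each $(R/\frq_k)_a$ is one-dimensional, so the $x$-truncation of $S/I_{X_k}$ is just a shift of it. Finally $S/I_{X_k}\cong(R/\frq_k)\otimes_\Bbbk(\Bbbk[\bby]/I_{Y_k})$ has $\depth_S=\depth_R(R/\frq_k)+\depth_{\Bbbk[\bby]}(\Bbbk[\bby]/I_{Y_k})=1+1=2$, since $R/\frq_k$ is a polynomial ring in one variable (the cone over a point of $\PP^n$) and finite sets of points in $\PP^m$ are arithmetically Cohen--Macaulay. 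Hence $\depth_S\<\bbx\>^t(S/I_X)=2$, which with the bound $\le2$ gives $\depth_S S/(I_X\cap\<\bbx\>^t)=2$ and length $n+m$.

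I expect the main obstacle to be the identification $\<\bbx\>^t(S/I_X)\cong\bigoplus_k(S/I_{X_k})(-t,0)$ for $t\ge\ell-1$: this is precisely where the numerical bound is consumed — through the classical fact that $\ell$ points impose independent conditions on degree-$(\ell-1)$ forms — and it must be executed with care, verifying the Chinese-remainder isomorphism bidegree by bidegree and checking that it respects the $S$-module structure. The other ingredients (the saturation computation, the minimal-prime bound, the local-cohomology comparison, and additivity of depth over $\otimes_\Bbbk$) are routine.
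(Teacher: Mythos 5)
Your proof is correct, but it follows a genuinely different route from the paper for the crux, the depth bound $\depth_S S/(I_X\cap\<\bbx\>^t)\ge 2$. The paper's strategy is to exhibit an explicit regular sequence of length two: after \cref{dodging}, $y_0$ is a nonzerodivisor, and then one needs a second nonzerodivisor on $S/\<I_X\cap\<\bbx\>^t,y_0\>$. To find it, the paper first establishes the primary decomposition of $\<I_X\cap\<\bbx\>^t,y_0\>$ in \cref{prop:primaryDecompPPnxPPm} (whose proof runs through the Hilbert-function comparisons of \cref{lem:VSeq} and the containment \cref{lem:contains-x-ideal}), identifies the associated primes, and then chooses a form $L+y_1$ avoiding all of them. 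You avoid the primary-decomposition machinery entirely: the short exact sequence $0\to\<\bbx\>^t(S/I_X)\to S/(I_X\cap\<\bbx\>^t)\to S/\<\bbx\>^t\to0$ together with $\depth S/\<\bbx\>^t=m+1\ge2$ reduces the depth bound (via local cohomology) to $\depth\<\bbx\>^t(S/I_X)\ge2$, and the truncation $\<\bbx\>^t(S/I_X)$ is identified — precisely when $t\ge\ell-1$ — with $\bigoplus_k(S/I_{X_k})(-t,0)$ via a degree-by-degree Chinese remainder. Since each $S/I_{X_k}\cong(R/\frq_k)\otimes_\Bbbk(\Bbbk[\bby]/I_{Y_k})$ has depth $1+1=2$, you are done. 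The numerical input $t\ge\ell-1$ enters both arguments through the same classical fact — $\ell$ distinct points of $\PP^n$ impose independent conditions on forms of degree $\ge\ell-1$ — but it is packaged differently: the paper encodes it in the containment $\<\bbx\>^{\ell-1}\subseteq I_{X_\ell}+J_{\ell-1}$ of \cref{lem:contains-x-ideal}, while you encode it as surjectivity of the evaluation map $R_a\to\bigoplus_k R_a/(\frq_k)_a$. The paper's method is more explicit and constructive, yielding a concrete primary decomposition and regular sequence that feed into later parts of the section; your method is more structural and arguably more transparent about why the depth bound holds, at the cost of invoking local cohomology. (For the lower bound on length, you use the associated-prime inequality $\depth\le\dim S/\frp$, whereas the paper cites codimension via \cite{virtual-original}*{Proposition 2.5.a}; these are equivalent here.) One small caution in writing up your version: the identification $\<\bbx\>^t(S/I_{X_k})\cong(S/I_{X_k})(-t,0)$ should be checked to be $S$-linear, not merely an isomorphism of bigraded vector spaces, which holds because $R/\frq_k$ is a polynomial ring in one variable and multiplication by any $x_i$ acts through it.
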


A second method for finding virtual resolutions, called \emph{virtual of a pair} in \cite{virtual-original} obtains virtual resolutions by taking certain subcomplexes of the minimal free resolution, see \cref{thm:trimming}. 
\cite{HNV-points}*{Theorem 3.1} used the virtual of a pair construction to give an explicit description of certain virtual resolutions for finite sets of points $X \subseteq \PP^1 \times \PP^1$. 
In the same setting \cite{booms-peot}*{Theorems 3.3, 1.4} built on this result and gave a sufficient condition for when virtual resolutions of a pair have length $2$.

Our second main result generalizes part of this work to $\PP^1\times\PP^2$, while relying on a weakened form of the Minimal Resolution Conjecture (see \cref{conj:mrc}), which is open for multiprojective spaces. 

\begin{defn}
\label{def:suffGeneral}
For a finite set of points $X\subseteq \Pnm$, 
the \emph{Hilbert matrix} $H_X$ has entries 
\[
(H_X)_{i,j} \defeq H_{S/I_X}(i,j) 
\quad\text{for all }
(i,j)\in\ZZ^2
\]
tabulating the bigraded Hilbert function of $S/I_X$.
If 
\[
H_X(i,j)=\min\{|X|,T_{i,n}T_{j,m}\} 
\quad\text{for all}\quad 
i,j\geq 0,
\]
with
\[
T_{a,b} := \binom{a+b}{b}, 
\]
then we say that $X$ has a \emph{generic Hilbert matrix}. 
\cref{prop:generic} shows that this is an open condition on the Hilbert scheme of points in $\Pnm$. 
We will further say that $X$ is a set of points in \emph{sufficiently general position} if it has a generic Hilbert function and satisfies an additional condition on its first Betti numbers, as given in \cref{conj:mrc}.   
\end{defn} 

\begin{thm}[See \cref{thm:virtual-of-pair-stronger} and \cref{sec:cases-2-to-11}]
\label{thm:virtual-of-a-pair}
Let $X\subseteq \PP^1\times\PP^2$ be a finite set of points in sufficiently general position. 
If a weakened form of the Minimal Resolution Conjecture holds (see \cref{conj:mrc}), 
then $S/I_X$ has a virtual resolution of length $3$ obtained from the virtual of a pair construction at $\bbd=(|X|-1,0)$.
\end{thm}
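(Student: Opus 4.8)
The plan is to apply the virtual‑of‑a‑pair construction of \cref{thm:trimming} to the pair $(S/I_X,\bbd)$ with $\bbd=(|X|-1,0)$: form the truncation $N\defeq(S/I_X)_{\geq\bbd}$ and verify two things, namely (i) that $\bbd$ lies in the multigraded regularity region of $S/I_X$, so that the minimal free resolution of $N$ is a virtual resolution of $S/I_X$, and (ii) that this resolution has length exactly $3$. Since $\dim S=5$ while $\dim N=\dim S/I_X=2$, and $\depth_S N\geq 1$ because $N$ is a graded submodule of $S/I_X$, which is $B$-saturated and hence $\frm$-torsion free, the Auslander--Buchsbaum formula forces $\pdim_S N\in\{3,4\}$; thus (ii) amounts to showing that $N$ is Cohen--Macaulay, i.e.\ that $\beta_{4,\mathbf{a}}(N)=0$ for all $\mathbf{a}\in\ZZ^2$.

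Step (i) should be routine. Because $I_X$ is $B$-saturated and $X$ is zero-dimensional, $\HH^i_B(S/I_X)=0$ for $i\neq 1$, while $\HH^1_B(S/I_X)_{\mathbf{a}}$ is the cokernel of the injective evaluation map $(S/I_X)_{\mathbf{a}}\hookrightarrow\Bbbk^{|X|}$ and hence vanishes exactly when $\mathbf{a}\geq(0,0)$ and $H_X(\mathbf{a})=|X|$. As $X$ has a generic Hilbert matrix, $H_X(a,b)=\min\{|X|,(a+1)\binom{b+2}{2}\}$ equals $|X|$ whenever $a\geq|X|-1$, so $\HH^1_B(S/I_X)_{\mathbf{a}}=0$ for every $\mathbf{a}\geq\bbd$. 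Matching this against the definition of Maclagan--Smith regularity — whose only non-vacuous clause here is the one imposing vanishing of $\HH^1_B$ on the cone $\bbd+\NN^2$, since $\HH^i_B(S/I_X)=0$ for $i\geq 2$ — shows that $S/I_X$ is $\bbd$-regular, so \cref{thm:trimming} applies. That $\bbd+\NN^2$ is the relevant cone is exactly why the degree $(|X|-1,0)$, essentially the smallest admissible one, is the right choice.

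Step (ii) is the heart of the matter and is where the weakened Minimal Resolution Conjecture (\cref{conj:mrc}) enters; it is carried out in \cref{thm:virtual-of-pair-stronger}. Being $\bbd$-regular, $N$ is generated in the single degree $\bbd$ by $H_X(\bbd)=|X|$ elements, and regularity pins down the possible degrees $\bbd+\lambda$, $\lambda\in\NN^2$, of the free summands in each homological degree. The bigraded Hilbert function of $N$ is explicit, $H_N(\mathbf{a})=H_X(\mathbf{a})$ for $\mathbf{a}\geq\bbd$ and $0$ otherwise, so — via the relationship between Betti numbers and higher difference matrices of the bigraded Hilbert function — the alternating sums $\sum_i(-1)^i\beta_{i,\mathbf{a}}(N)$ are determined. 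The weakened MRC — built into the notion of \emph{sufficiently general position} in \cref{def:suffGeneral} and supplemented by the hypothesis of the theorem — guarantees that for such $X$ no redundant (``ghost'') cancelling pairs $\beta_{i,\mathbf{a}}(N),\beta_{i+1,\mathbf{a}}(N)$ arise in the relevant range, so these alternating sums recover the individual Betti numbers. Reading off the table then gives $\beta_{4,\mathbf{a}}(N)=0$ for all $\mathbf{a}$, so $\pdim_S N=3$ and the virtual resolution has length $3$.

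I expect the main obstacle to be the bookkeeping in step (ii): the shape of the staircase $\min\{|X|,(a+1)\binom{b+2}{2}\}$ near its corners, and hence the higher difference matrix whose entries are these alternating Betti sums, depends on how $|X|$ sits relative to the binomials $\binom{b+2}{2}$, so the computation — and the check that it leaves no room for a cancelling pair in homological degrees $3$ and $4$ — is uniform only once $|X|$ is sufficiently large. The finitely many small cases $2\leq|X|\leq 11$, where the staircase is too compressed for the general formulas, I would instead settle by direct computation; this is \cref{sec:cases-2-to-11}.
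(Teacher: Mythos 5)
Your proposal misidentifies the ``virtual of a pair'' construction, and this error propagates through the rest of the argument. In \cref{thm:trimming}, the virtual resolution of the pair $(M,\bbd)$ is the \emph{subcomplex} of the minimal free resolution $F_\bullet$ of $M$ itself consisting of all free summands generated in degree at most $\bbd+(n,m)$; its zeroth term is still $F_0=S$. You instead form the truncation $N=(S/I_X)_{\geq\bbd}$ and resolve $N$. That is a genuinely different complex: its zeroth term is $S(-(|X|-1),0)^{|X|}$, not $S$, and it does not agree with the trimmed complex in the paper's \cref{thm:virtual-of-pair-stronger}, which explicitly begins with $S$. Since the theorem is specifically about the complex produced by the virtual-of-a-pair construction, showing that your $N$ has a resolution of length $3$ would not prove the statement, even if it turned out to be correct.

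Beyond the misidentification, step (ii) has a gap on its own terms: you reduce to showing $\pdim_S N=3$, equivalently (by Auslander--Buchsbaum with $\depth S=5$, $\dim N=2$, $\depth N\ge 1$) that $N$ is Cohen--Macaulay, but you never argue this. ``Reading off the table then gives $\beta_{4,\mathbf a}(N)=0$'' is precisely the assertion that needs proof, and nothing in the proposal supplies it --- note also that the weakened MRC of \cref{conj:mrc} is phrased in terms of $DH_X$, the difference matrix of $S/I_X$, not of the truncation $N$, whose Hilbert function (and hence difference matrix) is cut off below $\bbd$ and so has genuinely different boundary behavior. The paper instead works directly with the Betti numbers of $S/I_X$: \cref{conj:mrc} controls $\beta_{1,(i,j)}$ via the first negative entries of $DH_X$, \cref{lem:beta2-plus} (a filtration argument on the resolution plus the Acyclicity Lemma applied to the strands $C^i$, which become complexes over $\Bbbk[y_0,y_1,y_2]$) shows $\beta_{2,(i,j)}$ is the first positive entry after a negative one in each row, and then minimality together with \cref{prop:CRgeneral} pins down $\beta_3$ and forces $\beta_4=0$ in the degrees $\le(N,2)$ that survive the trimming. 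That last accounting is what makes the trimmed subcomplex have length $3$, and it has no analogue in your write-up.

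Step (i) is essentially fine: you correctly identify $\bbd=(|X|-1,0)\in\reg(S/I_X)$ from the generic Hilbert matrix; the paper just quotes \cref{prop:MS-HFvalue} for this. But the remainder needs to be redone against the actual trimming construction of \cref{thm:trimming}.
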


It is well-known that Hilbert functions can be computed as alternating sums of graded Betti numbers. 
For $\PP^1\times\PP^1$, \cite{giuffrida-92}*{Proposition 3.3} show that Betti numbers can be expressed via a certain ``second difference matrix" of $H_X$, and 
the authors of \cite{HNV-points} and \cite{booms-peot} exploited this in their study of virtual resolutions for sets of points in $\PP^1\times\PP^1$. 
In our proof of \cref{thm:virtual-of-a-pair}, we similarly use an analogous observation that for $\Pnm$. 

\begin{defn}
\label{def:column-row-difference-operators}
Define a partial order on $\ZZ^2$ by letting $(i,j)\geq (i',j')$ if $i\geq i'$ and $j\geq j'$. For any infinite matrix $H = (H_{i,j})$ indexed by $(i,j)\in\ZZ^2$, 
the \emph{column difference operator} $\Delta^C$ and \emph{row difference operator} $\Delta^R$ are
\begin{align*}
\Delta^C(H)_{i,j} := H_{i,j} - H_{i-1,j}
\qquad\text{and}\qquad 
\Delta^R(H)_{i,j} := H_{i,j} - H_{i,j-1}. 
\end{align*}
Note that the operators $\Delta^C$ and $\Delta^R$ commute. 
\end{defn}

\begin{prop}
\label{prop:CRgeneral}
If $M$ is a finitely generated bigraded $S$-module with minimal free resolution 
\[
F_\bullet:\quad \bigoplus_{i,j\geq 0}S(-i,-j)^{\beta_{0,(i,j)}}\gets \bigoplus_{i,j\geq 0}S(-i,-j)^{\beta_{1,(i,j)}} \gets \cdots\gets \bigoplus_{i,j\geq 0}S(-i,-j)^{\beta_{k,(i,j)}} \gets \cdots, 
\]
then the Hilbert matrix $H_M$ satisfies
\[
\big((\Delta^C)^{n+1}(\Delta^R)^{m+1}H_M\big)_{i,j}=B_{i,j}, 
\]
where 
    $B_{p,q} \defeq \sum_{k=0}^{\infty} (-1)^k\beta_{k,(p,q)}$ 
for all $(p,q)\in\ZZ^2$. 
\end{prop}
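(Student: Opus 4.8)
The plan is to compute the bigraded Hilbert series of $M$ from its minimal free resolution and then observe that the stated difference operators correspond exactly to multiplication by an appropriate power of $(1-s)(1-t)$ on the level of generating functions. First I would recall that since $F_\bullet$ is a finite free resolution, the additivity of the Hilbert function along the resolution gives
\[
H_M(i,j) = \sum_{k\geq 0}(-1)^k \sum_{p,q\geq 0}\beta_{k,(p,q)}\, H_S(i-p,j-q),
\]
where $H_S$ denotes the Hilbert function of $S=\Bbbk[\bbx,\bby]$. Since $S$ is a polynomial ring on $n+1$ variables of degree $(1,0)$ and $m+1$ variables of degree $(0,1)$, we have $H_S(a,b) = T_{a,n}\,T_{b,m} = \binom{a+n}{n}\binom{b+m}{m}$ for $a,b\geq 0$ and zero otherwise; equivalently, in terms of generating functions, $\sum_{a,b} H_S(a,b)\,s^a t^b = (1-s)^{-(n+1)}(1-t)^{-(m+1)}$.

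Next I would translate the difference operators into the language of formal power series. Writing $\mathcal H_M(s,t) = \sum_{i,j}H_M(i,j)\,s^i t^j$ (as a formal series, using that $H_M(i,j)=0$ for $i\ll 0$ or $j\ll 0$), the operator $\Delta^C$ corresponds to multiplication by $(1-s)$ and $\Delta^R$ to multiplication by $(1-t)$: indeed $\sum_{i,j}(H_{i,j}-H_{i-1,j})s^i t^j = (1-s)\mathcal H_M(s,t)$, and similarly for $\Delta^R$. Hence $(\Delta^C)^{n+1}(\Delta^R)^{m+1}H_M$ has generating function $(1-s)^{n+1}(1-t)^{m+1}\mathcal H_M(s,t)$. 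Combining this with the resolution formula above, which at the level of generating functions reads
\[
\mathcal H_M(s,t) = \Big(\sum_{k\geq 0}(-1)^k\sum_{p,q}\beta_{k,(p,q)}\,s^p t^q\Big)\cdot \frac{1}{(1-s)^{n+1}(1-t)^{m+1}},
\]
the factors of $(1-s)^{n+1}(1-t)^{m+1}$ cancel, leaving
\[
(1-s)^{n+1}(1-t)^{m+1}\mathcal H_M(s,t) = \sum_{k\geq 0}(-1)^k\sum_{p,q}\beta_{k,(p,q)}\,s^p t^q = \sum_{p,q}B_{p,q}\,s^p t^q.
\]
Reading off the coefficient of $s^i t^j$ gives $\big((\Delta^C)^{n+1}(\Delta^R)^{m+1}H_M\big)_{i,j} = B_{i,j}$, as claimed.

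The only genuine subtlety — and the step I would be most careful about — is justifying the manipulation of these series as formal power series (or as elements of a suitable Laurent-type completion) rather than as convergent functions, since the graded pieces of $M$ and of $F_k$ need not vanish for all negative bidegrees; one should fix that $M$ is bounded below in each coordinate (which holds for any finitely generated $\ZZ^2$-graded module over $S$ up to a shift, or one can work over the field of fractions $\Bbbk((s))((t))$), and check that the Hilbert series of each $F_k$ is a finite $\ZZ$-linear combination of shifts of $(1-s)^{-(n+1)}(1-t)^{-(m+1)}$, with only finitely many $k$ contributing. Once one is working in the right ring of formal series, everything is a routine identity: the commutativity of $\Delta^C$ and $\Delta^R$ (already noted) mirrors the commutativity of multiplication by $(1-s)$ and $(1-t)$, and the exactness of $F_\bullet$ off the irrelevant locus is not even needed here — only the equality of Euler characteristics of graded pieces, which is exact exactness of $F_\bullet$ as a complex of $\Bbbk$-vector spaces in each bidegree.
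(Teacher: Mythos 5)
Your proof is correct, but it takes a genuinely different route from the paper's. The paper (in \cref{lem:Hij} and \cref{lem:repeatDeltaCR}) works directly with the numerical formula $H_M(i,j)=\sum_{(p,q)\leq(i,j)}T_{i-p,n}T_{j-q,m}B_{p,q}$ and iteratively applies the difference operators, using the Pascal identity $T_{a,n}-T_{a-1,n}=T_{a,n-1}$ to drop the subscript by one at each step; after $n$ applications of $\Delta^C$ and $m$ of $\Delta^R$ the binomial weights disappear entirely, leaving the cumulative sum $\sum_{(p,q)\le(i,j)}B_{p,q}$, and one more application of each operator extracts $B_{i,j}$. You instead pass to bigraded Hilbert series and observe that $\Delta^C$, $\Delta^R$ become multiplication by $(1-s)$, $(1-t)$, while the resolution gives $\mathcal H_M(s,t)=\big(\sum_{p,q}B_{p,q}s^pt^q\big)(1-s)^{-(n+1)}(1-t)^{-(m+1)}$, so the $(1-s)^{n+1}(1-t)^{m+1}$ factors cancel at a stroke. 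The two are of course the same computation in different clothing --- the Pascal identity the paper uses is precisely the coefficient-level expression of $(1-s)\cdot(1-s)^{-(n+1)}=(1-s)^{-n}$ --- but the generating-function version is more conceptual and avoids the iterated summation bookkeeping, at the cost of a short detour to justify the formal manipulations. Your caveat about boundedness is unnecessary here, since the statement already restricts to shifts $S(-i,-j)$ with $i,j\ge 0$, a minimal free resolution over $S$ has length at most $n+m+2$, and each $F_k$ is finitely generated, so all relevant series live honestly in $\ZZ\llbracket s,t\rrbracket$; likewise, the phrase ``exactness of $F_\bullet$ off the irrelevant locus'' should just read ``exactness of $F_\bullet$'' --- a minimal free resolution is globally exact, and what you use is the resulting Euler-characteristic identity in each bidegree.
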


%%%%%%%%%%%%%%%%%%%%%%%
\subsection*{Outline}
%%%%%%%%%%%%%%%%%%%%%%%
We examine syzygies of points in $\Pnm$ in  \cref{sec:intersectingPPnxPPm} and prove \cref{thm:intersectionPPnxPPm}. 
\cref{prop:CRgeneral} is proven in  \cref{sec:MatrixandBetti}. 
We then focus our attention on $\PP^1\times\PP^2$, discussing our notion of points in sufficiently general position in \cref{sec:betti-points-mrc}, including a weakened statement of the Minimal Resolution Conjecture. 
Finally, we use this to prove \cref{thm:virtual-of-a-pair} in \cref{sec:VResPair}.

%%%%%%%%%%%%%%%%%%%%%%%%%%%%%%%%%%%%
\subsection*{Acknowledgements}
%%%%%%%%%%%%%%%%%%%%%%%%%%%%%%%%%%%%
This work is a result of the 
University of Minnesota School of Mathematics REU in Algebra and Combinatorics during Summer 2023, funded by NSF RTG Grant DMS-1745638. 
The group is grateful for TA Sasha Pevzner's guidance during the REU. 
We also thank Daniel Erman for helpful conversations related to this work and Gregory G. Smith for supplying a sketch of \Cref{prop:generic}. 
CB was partially supported by NSF Grant DMS 2001101.

%%%%%%%%%%%%%%%%%%%%%%%%%%%%%%%%%%%%%%%%%%%
\section{A short virtual resolution via intersection in \texorpdfstring{$\Pnm$}{PPn x PPm}}
%%%%%%%%%%%%%%%%%%%%%%%%%%%%%%%%%%%%%%%%%%%
\label{sec:intersectingPPnxPPm}

By the end of this section, we prove \cref{thm:intersectionPPnxPPm}.
We begin with several preliminary results. First, we note that after a change of coordinates, we may assume that $y_0$ is a nonzero divisor on $S/I_X$; see \cref{dodging}. 

For a point 
\[
\ba \times \bb 
= [1: a_1: a_2:\cdots:a_n] 
\times [1: b_1:b_2:\cdots: b_m]
\in \Pnm,
\]
and for $i,j>0$, let $L_{a_i} \defeq a_ix_0-x_i$ and $L_{b_j} \defeq b_jy_0-y_j$. Let 
\[
I_{\ba}=\< L_{a_1}, L_{a_2},\ldots, L_{a_n}\> \text{ and } 
I_{\bb}=\< L_{b_1}, L_{b_2},\ldots, L_{b_m}\>,
\]
so that $I_{\ba}+I_{\bb}$ is the bihomogeneous ideal for the point $\ba\times\bb$.

\begin{assumption}
\label{dodging}
Throughout this paper, we will consider a finite set of points $X \subseteq \Pnm$. 
After a linear change of coordinates, we may and will assume that every point in $X$ is of the form $[1: a_1: \cdots:a_n] \times [1:b_1: \cdots: b_m]$. 
\end{assumption}

\begin{notation}
\label{not:pointsNotation}
Let $\pi_1(X) = \{P_1, P_2, \ldots, P_{\ell}\}$ denote the set of distinct first coordinates in $X$, so $\ell = |\pi_1(X)|$.
Let $X_k \defeq \pi_1^{-1}(P_k) \cap X$ denote the pre-image in $X$ of $P_k$, and write 
\[
X_k = \{P_k \times Q_{k,1}, P_k \times Q_{k,2}, \ldots, P_k\times Q_{k,r_k} \} 
\subseteq X,
\]
where $r_k = |\pi_1^{-1}(P_k)|$. 
Thus, for each $k$ with $1 \le k \le \ell$, 
\begin{align}
\label{eq:IXk}
I_{X_k}=I_{P_k}+\bigcap_{j=1}^{r_k} I_{Q_{k,j}}.
\end{align}
Note that the ideal $I_X$ is necessarily $B$-saturated. 
For each $j\in\{1,2,\dots,\ell\}$, 
let 
\[
J_{j} \defeq \bigcap_{k=1}^j I_{X_k}.
\]
Note that $J_{j} = I_{Y_j}$, where $Y_j = \bigsqcup_{k=1}^{j}X_k$ for $1 \leq j \leq \ell$.
\end{notation}

\begin{lem}
\label{lem:contains-x-ideal}
Let $X$ be a set of at least two points in $\Pnm$ with $\ell=|\pi_1(X)|$ and $J_{\ell-1}$ as in \cref{not:pointsNotation}. 
If $t \geq \ell -1$, then 
$\< \bbx  \>^t \subseteq I_{X_{\ell}} + J_{\ell-1}$.
\end{lem}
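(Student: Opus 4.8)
The plan is to discard the $\bby$-variables and reduce to an elementary statement about $\bbx$, then exhibit one explicit form of $\bbx$-degree $\ell-1$. First, from \eqref{eq:IXk} we have $I_{P_k}\subseteq I_{X_k}$ for every $k$, hence $I_{P_\ell}\subseteq I_{X_\ell}$ and $\bigcap_{k=1}^{\ell-1}I_{P_k}\subseteq\bigcap_{k=1}^{\ell-1}I_{X_k}=J_{\ell-1}$. Since moreover $\<\bbx\>^t\subseteq\<\bbx\>^{\ell-1}$ for all $t\geq\ell-1$, it suffices to prove
\[
\<\bbx\>^{\ell-1}\ \subseteq\ I_{P_\ell}+\textstyle\bigcap_{k=1}^{\ell-1}I_{P_k}.
\]
The ideal $\<\bbx\>^{\ell-1}$ is generated by the bidegree $(\ell-1,0)$ component $S_{(\ell-1,0)}$ (spanned by the degree $\ell-1$ monomials in $\bbx$), so it is enough to show $S_{(\ell-1,0)}\subseteq I_{P_\ell}+\bigcap_{k=1}^{\ell-1}I_{P_k}$.

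Next I would examine the quotient modulo $I_{P_\ell}$. Writing $P_\ell=[1:a_1:\cdots:a_n]$, we have $S/I_{P_\ell}\cong\Bbbk[x_0,\bby]$ with $x_i\equiv a_ix_0$, so the bidegree $(\ell-1,0)$ piece of $S/I_{P_\ell}$ is the one-dimensional space spanned by the image of $x_0^{\ell-1}$. Consequently, to conclude $S_{(\ell-1,0)}=(I_{P_\ell})_{(\ell-1,0)}+\bigl(\bigcap_{k=1}^{\ell-1}I_{P_k}\bigr)_{(\ell-1,0)}$ it suffices to produce a single form $f\in\Bbbk[\bbx]$ of degree $\ell-1$ lying in $\bigcap_{k=1}^{\ell-1}I_{P_k}$ but not in $I_{P_\ell}$: such an $f$ has nonzero image in $(S/I_{P_\ell})_{(\ell-1,0)}$, hence the restriction map $\bigl(\bigcap_{k=1}^{\ell-1}I_{P_k}\bigr)_{(\ell-1,0)}\to (S/I_{P_\ell})_{(\ell-1,0)}$ is surjective, which gives the claimed equality of degree $(\ell-1,0)$ pieces.

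Finally I would build $f$ by hand. For each $k\in\{1,\dots,\ell-1\}$, since $P_k\neq P_\ell$ these points differ in some coordinate, so one of the $n$ linear generators of $I_{P_k}$ (forms in $\bbx$ only) is a linear form $g_k$ with $g_k(P_k)=0$ and $g_k(P_\ell)\neq0$. Set $f\defeq g_1g_2\cdots g_{\ell-1}$, a form of degree $\ell-1$ in $\Bbbk[\bbx]$. Then $f(P_k)=0$ for $1\le k\le\ell-1$, so $f\in\bigcap_{k=1}^{\ell-1}I_{P_k}$, while $f(P_\ell)=\prod_{k}g_k(P_\ell)\neq0$, so $f\notin I_{P_\ell}$; combined with the previous paragraph this finishes the proof. (In the degenerate case $\ell=1$ the stated containment reads $S\subseteq I_{X_1}+J_0$, which is immediate under the convention $J_0\defeq S$.) There is no real obstacle here: the only points needing attention are the initial reduction to the $\bbx$-variables and the observation that $S/I_{P_\ell}$ is a polynomial ring whose relevant graded piece is one-dimensional, so that a single product of $\ell-1$ suitably chosen linear forms suffices rather than a spanning set.
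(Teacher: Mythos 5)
Your proof is correct, and it takes a genuinely different route from the paper's. The paper argues by induction on $\ell$: after handling $\ell\leq 2$ by hand, it splits $X=Y_1\sqcup Y_2\sqcup X_\ell$ and exploits the ideal identity $(I_{X_\ell}+I_{Y_1})(I_{X_\ell}+I_{Y_2})\subseteq I_{X_\ell}+(I_{Y_1}\cap I_{Y_2})$, together with $\<\bbx\>^{\ell-1}=\<\bbx\>^{\ell-2}\cdot\<\bbx\>$, to descend the exponent. You instead discard the $\bby$-data entirely (using $I_{P_k}\subseteq I_{X_k}$), reduce to generators in bidegree $(\ell-1,0)$, note that $(S/I_{P_\ell})_{(\ell-1,0)}$ is one-dimensional, and then directly exhibit a degree-$(\ell-1)$ form $f=g_1\cdots g_{\ell-1}$ (one separating linear form per $P_k$, $k<\ell$) that lies in $\bigcap_{k<\ell}I_{P_k}$ but not in $I_{P_\ell}$. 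Your argument is non-inductive and constructive; it makes the source of the exponent $\ell-1$ transparent (one linear factor per point you must vanish on) and has a Lagrange-interpolation flavor. The paper's inductive product argument is a bit more abstract but is entirely formal in the ideals $I_{X_k}$ and never needs the linear structure of the $I_{P_k}$'s; your reduction to the $I_{P_k}$ relies on those being ideals of points cut out by linear forms, which is fine here but is an extra input. Both establish the same containment; one small note is that your dimension-counting step (codimension one of $(I_{P_\ell})_{(\ell-1,0)}$ plus a single transversal element gives everything) is doing the work that the paper's inductive step does, and it is worth flagging explicitly that $f\notin I_{P_\ell}$ because $I_{P_\ell}\cap\Bbbk[\bbx]$ is exactly the (prime) ideal of $P_\ell\in\PP^n$, so nonvanishing at $P_\ell$ suffices.
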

\begin{proof}
We prove this statement by induction on $\ell = |\pi_1(X)|$. 
The base case $\ell=1$ is trivial. 
If $\ell = 2$, then 
$I_{X_1} = \< L_{a_1}, \ldots, L_{a_n}, G_1, G_2,\ldots, G_k \>$, 
where $L_{a_i}$ is a linear form in $x_0$ and $x_i$, and each $G_i$ is a polynomial in the $\bby$-variables; 
the ideal $I_{X_2} = \< L_{a_1}', \ldots, L_{a_n}', G_1', G_2', \ldots, G_{k'}' \>$ is of the same form. 
Then together, 
\[
I_{X_1} + I_{X_2} = \< x_0,x_1,\dots,x_n, G_1, G_2, \ldots, G_k, G_1', G_2', \ldots, G_{k'}' \>,
\]
which contains $\<\bbx\>^t$ for all $t\geq 1$, as desired. 

Now for any $\ell > 2$, write $X = Y_1 \sqcup Y_2 \sqcup X_{\ell}$,  
where 
$Y_1 = \bigsqcup_{k=1}^{\ell-2}{X_k}$ 
and $Y_2 = {X_{\ell-1}}$, 
so $|\pi_1(Y_1)| = \ell-2$ and $|\pi_1(Y_2)| = 1$.
Then by the inductive hypothesis, 
\[
\<\bbx \>^{\ell-2} 
 \subseteq I_{X_{\ell}} + I_{Y_1}
\qquad \text{and}\qquad 
\< \bbx \> 
 \subseteq I_{X_{\ell}} + I_{Y_2}.
\]
Putting these together yields the desired result: 
\begin{align*}
\<\bbx\>^{\ell-2} \cdot \<\bbx\> 
 \subseteq
(I_{X_{\ell}} + I_{Y_1}) \cdot (I_{X_{\ell}} + I_{Y_2}) 
 \subseteq I_{X_{\ell}} + \left(I_{Y_1} \cap I_{Y_2}\right)
 = I_{X_{\ell}} + \bigcap_{k=1}^{\ell-1}I_{X_k}
= I_{X_{\ell}} + J_{\ell-1}.
\qquad\quad\,\qedhere
\end{align*}
\end{proof}

We now begin working towards \cref{prop:primaryDecompPPnxPPm}, which provides a primary decomposition of $\left\<I_X\cap\<\bbx\>^t,y_0\right\>$, the final ingredient needed to prove \cref{thm:intersectionPPnxPPm}. 

\begin{lem}
\label{lem:NZD-SEQ}
If $I$ is an ideal in $S$ such that $S/I$ has a nonzero divisor $L$ of degree $(0,1)$, then for all $(i,j)\in\ZZ^2$, 
\[
H_{S/(I + \< L \>)}(i,j) 
= H_{S/I}(i,j) - H_{S/I}(i,j-1).
\]
\end{lem}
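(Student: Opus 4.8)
The plan is to exploit the standard short exact sequence coming from multiplication by $L$. Since $L$ has degree $(0,1)$, multiplication by $L$ defines a degree-preserving map of $\ZZ^2$-graded modules
\[
\cdot L\colon (S/I)(0,-1) \lra S/I,
\]
whose image is $(I + \< L\>)/I$ and whose cokernel is therefore $S/(I+\<L\>)$. The hypothesis that $L$ is a nonzero divisor on $S/I$ is exactly the statement that this map is injective, so we obtain the short exact sequence
\[
0 \lra (S/I)(0,-1) \xrightarrow{\ \cdot L\ } S/I \lra S/(I+\<L\>) \lra 0
\]
of finitely generated $\ZZ^2$-graded $S$-modules.

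Next I would pass to the degree $(i,j)$ graded piece, which is exact since the maps are homogeneous, and apply additivity of vector space dimension. This gives
\[
H_{S/(I+\<L\>)}(i,j) = H_{S/I}(i,j) - H_{(S/I)(0,-1)}(i,j).
\]
Finally I would unwind the shift convention $M(\mathbf{a})_{\mathbf{d}} = M_{\mathbf{a}+\mathbf{d}}$: the $(i,j)$ piece of $(S/I)(0,-1)$ is the $(i,j-1)$ piece of $S/I$, so $H_{(S/I)(0,-1)}(i,j) = H_{S/I}(i,j-1)$, which yields the claimed identity for all $(i,j)\in\ZZ^2$.

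There is essentially no hard part here; the only point requiring care is the direction of the degree shift (whether the twist is by $(0,-1)$ or $(0,1)$), which must be checked against the paper's grading conventions so that the right-hand side reads $H_{S/I}(i,j) - H_{S/I}(i,j-1)$ rather than the reverse. Everything else is the routine additivity of Hilbert functions on a three-term exact sequence.
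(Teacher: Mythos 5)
Your proof is correct and takes essentially the same approach as the paper: the paper writes the short exact sequence with $S/(I:\<L\>)(0,-1)$ in the first slot and then observes $I:\<L\>=I$ by the nonzero divisor hypothesis, whereas you put $(S/I)(0,-1)$ directly and justify injectivity of $\cdot L$ from the same hypothesis, which is the same argument phrased slightly differently.
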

\begin{proof}
There is a graded short exact sequence 
\[
0\to \frac{S}{I : \< L \>} (0, -1)
\xrightarrow{\ \ \cdot \overline{L}\ \ } 
\frac{S}{I} \longrightarrow \frac{S}{I+\<L\>} \to 0,  
\]
and, since $L$ is a nonzero divisor on $S/I$,  
$I : \< L \> = I$, so the desired equality of Hilbert functions follows. 
\end{proof}

\begin{lem}
\label{lem:VSeq}
Let $X$ be a finite set of points in $\Pnm$ as in \cref{dodging}. If $i \ge |\pi_1(X)|-1$, then there is an equality of bigraded pieces for all $j\in\ZZ$: 
\begin{equation}
\label{eq:VS}
\left[
    \bigcap_{P_k\in \pi_1(X)} \< I_{X_k}, y_0 \>
\right]_{(i,j)}
= \left[ \< I_X , y_0 \> \right]_{(i,j)}.
\end{equation}
\end{lem}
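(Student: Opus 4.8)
The plan is to prove the two containments of the bigraded pieces separately. One direction is immediate: since $I_X = \bigcap_k I_{X_k}$, we have $I_X \subseteq I_{X_k}$ for every $k$, hence $\langle I_X, y_0\rangle \subseteq \langle I_{X_k}, y_0\rangle$ for each $k$, so $\langle I_X, y_0\rangle \subseteq \bigcap_k \langle I_{X_k}, y_0\rangle$ as ideals — this holds in every degree, with no hypothesis on $i$. The content of the lemma is therefore the reverse inclusion $\bigl[\bigcap_k \langle I_{X_k}, y_0\rangle\bigr]_{(i,j)} \subseteq \bigl[\langle I_X, y_0\rangle\bigr]_{(i,j)}$ for $i \geq \ell - 1$, where $\ell = |\pi_1(X)|$.

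For the reverse inclusion I would argue by induction on $\ell$, mirroring the structure of the proof of \cref{lem:contains-x-ideal}. The base case $\ell = 1$ is trivial since both sides are literally $\langle I_{X_1}, y_0\rangle$. For the inductive step, write $X = Y_{\ell-1} \sqcup X_\ell$ with $Y_{\ell-1} = \bigsqcup_{k=1}^{\ell-1} X_k$, so $J_{\ell-1} = I_{Y_{\ell-1}}$ and $I_X = J_{\ell-1} \cap I_{X_\ell}$. The key algebraic identity I want is that, modulo $\langle \bbx\rangle^t + \langle y_0\rangle$-type considerations, intersection and the operation $\langle -, y_0\rangle$ interact well. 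Concretely: take $f \in \bigl[\bigcap_{k=1}^\ell \langle I_{X_k}, y_0\rangle\bigr]_{(i,j)}$ with $i \geq \ell - 1$. Then $f \in \langle I_{X_\ell}, y_0\rangle$ and $f \in \bigcap_{k=1}^{\ell-1}\langle I_{X_k}, y_0\rangle = \langle J_{\ell-1}, y_0\rangle$ in degree $(i,j)$ — the latter by the inductive hypothesis, since $i \geq \ell - 1 \geq (\ell-1) - 1 = |\pi_1(Y_{\ell-1})| - 1$. So it suffices to show $\langle I_{X_\ell}, y_0\rangle \cap \langle J_{\ell-1}, y_0\rangle$ agrees with $\langle I_{X_\ell} \cap J_{\ell-1}, y_0\rangle = \langle I_X, y_0\rangle$ in degrees $(i,j)$ with $i \geq \ell-1$.

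The heart of the matter is thus a statement of the form: for two ideals $I, J$ defining disjoint point configurations that are separated by their first coordinates, $\langle I, y_0\rangle \cap \langle J, y_0\rangle$ and $\langle I \cap J, y_0\rangle$ coincide in high enough $x$-degree. I would prove this by working in the quotient $S/\langle y_0\rangle$, where $\langle I, y_0\rangle \cap \langle J, y_0\rangle / \langle y_0\rangle = \bar I \cap \bar J$ and $\langle I\cap J, y_0\rangle/\langle y_0\rangle = \overline{I \cap J}$; the obstruction to equality is measured by $\mathrm{Tor}$, or more concretely by the colon ideal, and one shows $(\bar I \cap \bar J : \bar I)$ or the relevant annihilator contains a power of $\langle \bbx\rangle$ of the predicted exponent, invoking \cref{lem:contains-x-ideal} applied to the configuration $X$ (note $\langle\bbx\rangle^{\ell-1} \subseteq I_{X_\ell} + J_{\ell-1}$). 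The main obstacle — and where care is needed — is bookkeeping the exact degree bound: one must verify that the power of $\langle\bbx\rangle$ produced by \cref{lem:contains-x-ideal} together with the inductive degree shift yields precisely $i \geq \ell-1$ and not something weaker, and that passing to $S/\langle y_0\rangle$ does not disturb the $\bbx$-grading (it does not, since $y_0$ has degree $(0,1)$, so the $\bbx$-structure is untouched). I expect the comparison of $\langle I, y_0\rangle \cap \langle J, y_0\rangle$ with $\langle I \cap J, y_0\rangle$ via the containment $\langle\bbx\rangle^{\ell-1}\subseteq I_{X_\ell}+J_{\ell-1}$ to be the crux, with everything else being formal.
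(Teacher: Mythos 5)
Your proposal is correct and takes a genuinely different route. The paper proves \eqref{eq:VS} by a Hilbert-function count: it computes $H_{S/\<I_X,y_0\>}$ using \cref{lem:NZD-SEQ} (which requires $y_0$ to be a nonzero divisor, via \cref{dodging}), applies Mayer--Vietoris for $I_X = J_{\ell-1}\cap I_{X_\ell}$ and for $\bigcap_k\<I_{X_k},y_0\>$, kills the cross-terms with \cref{lem:contains-x-ideal}, and matches the resulting formulas. You instead reduce to the statement $\<J_{\ell-1},y_0\>\cap\<I_{X_\ell},y_0\> = \<J_{\ell-1}\cap I_{X_\ell},y_0\>$ in degrees $i\geq\ell-1$ and propose to control the obstruction homologically. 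That obstruction is precisely the image of the connecting map $\Tor_1^S\!\big(S/(J_{\ell-1}+I_{X_\ell}),\,S/\<y_0\>\big)\to S/\<J_{\ell-1}\cap I_{X_\ell},y_0\>$ coming from the Mayer--Vietoris sequence $0\to S/(I\cap J)\to S/I\oplus S/J\to S/(I+J)\to 0$ tensored with $S/\<y_0\>$; and $\Tor_1$ is a submodule of $S/(J_{\ell-1}+I_{X_\ell})(0,-1)$, which vanishes once $i\geq\ell-1$ by \cref{lem:contains-x-ideal} (the twist is only in the $j$-coordinate, so the $\bbx$-degree bound is unchanged). One advantage of your route is that it never invokes the nonzero-divisor hypothesis on $y_0$, whereas the paper's count leans on \cref{lem:NZD-SEQ} at two separate points; your argument is also element-wise rather than dimension-counting. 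One small caveat: the colon ideal $(\bar I\cap\bar J:\bar I)$ you float is not quite the right obstruction (the correct annihilator is the $y_0$-torsion $\big((J_{\ell-1}+I_{X_\ell}):y_0\big)/(J_{\ell-1}+I_{X_\ell})$, i.e.\ the $\Tor_1$ above), but your Tor framing and your identification of $\<\bbx\>^{\ell-1}\subseteq J_{\ell-1}+I_{X_\ell}$ as the crux are exactly right, so this is a matter of pinning down which annihilator rather than a gap in the plan.
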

\begin{proof}
Because the vector space on the right-hand side of \eqref{eq:VS} is contained in the left-hand side, it suffices to show that these vector spaces have the same dimension, which we use induction on $\ell\defeq |\pi_1(X)|$. 
The base case $\ell=1$ is a tautology for all $i$. 

By way of induction, suppose that $\ell > 1$ and \eqref{eq:VS} holds for all sets of points with strictly fewer distinct first coordinates than $X$. 
To examine the right-hand side of \eqref{eq:VS}, note that \cref{dodging} and \cref{lem:NZD-SEQ} together imply that 
\begin{equation}
\label{eq:hilb-y0}
H_{S/\< I_X, y_0\>}(i,j)=H_{S/I_X}(i,j)-H_{S/I_X}(i,j-1).
\end{equation}
Since $I_X = \bigcap_{k=1}^{\ell} I_{X_k}$, there is a graded short exact sequence
\[
0 \to S/I_X \to S/J_{\ell-1}
\oplus S/I_{X_{\ell}}
\to S/(J_{\ell-1}+I_{X_{\ell}}) \to 0, 
\]
where $J_{\ell-1}$ is as in \cref{not:pointsNotation}. 
By \cref{lem:contains-x-ideal}, the ideal $J_{\ell -1 } + \< I_{X_{\ell}} \>$ contains $\<\bbx \>^{\ell - 1}$. 
Hence   
$H_{S/(J_{\ell-1}+I_{X_{\ell}})}(i,j)=0$ for all $i\geq \ell-1$, 
so by the additivity of Hilbert functions on graded short exact sequences, for all $j$, 
\begin{align}
\label{eq:hilb-ijell}
H_{S/I_X}(i,j) 
&= H_{S/J_{\ell-1}}(i,j) + H_{S/I_{X_{\ell}}}(i,j) 
\\
\text{and}\quad 
H_{S/I_X}(i,j-1)
&= H_{S/J_{\ell-1}}(i,j-1) + H_{S/I_{X_{\ell}}}(i,j-1).
\label{eq:hilb-ij-1ell}
\end{align}
Combining \eqref{eq:hilb-y0}, \eqref{eq:hilb-ijell}, and \eqref{eq:hilb-ij-1ell} yields the following expression for the Hilbert function of the right-hand side of \eqref{eq:VS}:  
\begin{multline}
\label{eq:RHS}
H_{S/\< I_X, y_0\>}(i,j)=
H_{S/J_{\ell-1}}(i,j)+H_{S/I_{X_{\ell}}}(i,j)
-H_{S/J_{\ell-1}}(i,j-1)-H_{S/I_{X_{\ell}}}(i,j-1).
\end{multline}

Now to examine the left-hand side of \eqref{eq:VS}, 
it follows from the inductive hypothesis that 
\[
\left[\bigcap_{k=1}^{\ell-1} \< I_{X_k}
,  y_0\>\right]_{(i,j)}
= \left[\bigcap_{k=1}^{\ell-1} I_{X_k}
+ \< y_0\>\right]_{(i,j)}.
\]
Since there is a graded short exact sequence 
\[
0\to 
\frac{S}{\bigcap_{k=1}^{\ell} \< I_{X_k}, y_0 \>} 
\to 
\frac{S}{ J_{\ell-1}+\< y_0 \>} 
\oplus 
\frac{S}{I_{X_{\ell}}+ \< y_0\>}
\to
\frac{S}{J_{\ell-1}+ I_{X_{\ell}} + \< y_0\>} 
\to 0, 
\]
the following equality of Hilbert functions holds for all $j$: 
\begin{multline*}
H_{S/\bigcap_{k=1}^{\ell}\< I_{X_k}, y_0 \>}(i,j) 
= 
H_{S/(J_{\ell-1}+\< y_0 \>)}(i,j) 
+ H_{S/(I_{X_{\ell}}+\< y_0 \>)}(i,j) 
- H_{S/(J_{\ell-1}+I_{X_{\ell}}+\< y_0 \>)}(i,j). 
\end{multline*}
Next, by \cref{lem:contains-x-ideal}, since $i \geq |\pi_1(X)| - 1$,  there is a containment 
$\< \bbx \>^i \subseteq J_{\ell-1} + I_{X_{\ell}}$, so 
\[
S_{i,j} = 
\left[ 
    \bigcap_{k=1}^{\ell-1}  I_{X_k} + I_{X_{\ell}} 
\right]_{(i,j)}
\qquad \text{and} \qquad 
H_{S/(J_{\ell-1}+I_{X_{\ell}}+\< y_0 \>)}(i,j) 
= 0.
\]
Since $y_0$ is a nonzero divisor on $S/J_{\ell-1}
$ and $S/I_{X_\ell}$, by \cref{lem:NZD-SEQ}, for all $j$, 
\begin{align*}
H_{S/(J_{\ell-1}+\< y_0 \>)}(i,j)
&= H_{S/J_{\ell-1}}(i,j) - H_{S/J_{\ell-1}}(i,j-1)
\\
\text{and}\qquad 
H_{S/(I_{X_{\ell}}+\< y_0 \>)}(i,j) 
&= H_{S/I_{X_{\ell}}}(i,j) - H_{S/I_{X_{\ell}}}(i,j-1).
\end{align*}
Comparing with \eqref{eq:RHS}, it follows that for $i\geq \ell-1$, 
\[
H_{S/\< I_X, y_0\>}(i,j) = 
H_{S/\bigcap_{k=1}^{\ell}\< I_{X_k}, y_0 \>}(i,j), 
\]
so \eqref{eq:VS} is an equality, as desired.
\end{proof}

When a set of points $X$ has a generic Hilbert function, equality can be obtained sooner. 

\begin{cor}
\label{cor:VSgp}
If $X$ is a finite set of points with generic Hilbert function in $\Pnm$ as in \cref{dodging} and $d = \min\{r \mid T_{r,n} \ge |X|\}$, then \eqref{eq:VS} holds for all $i\geq d$ and $j\in\ZZ$. 
\end{cor}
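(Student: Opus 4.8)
The plan is to sidestep the induction of \cref{lem:VSeq}: once $X$ has a generic Hilbert matrix, one can identify both sides of \eqref{eq:VS} outright and simply compare Hilbert functions. The crucial first step --- and the only genuinely nonformal one --- is that a generic Hilbert matrix forces every point of $X$ to have a distinct first coordinate. Indeed, setting $j = 0$ in the definition gives $H_X(i,0) = \min\{|X|, T_{i,n}\}$ for all $i \geq 0$, while a bihomogeneous form of bidegree $(i,0)$ vanishes on $X$ exactly when it vanishes on $\pi_1(X)$, so $H_X(i,0) = H_{S_{\PP^n}/I_{\pi_1(X)}}(i)$, where $I_{\pi_1(X)}$ is the vanishing ideal in $S_{\PP^n}$ of the $\ell$-point set $\pi_1(X)$. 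The Hilbert function of $\ell$ reduced points stabilizes in $i$ to $\ell$, whereas $\min\{|X|, T_{i,n}\}$ stabilizes to $|X|$; since these agree for every $i$, we get $\ell = |X|$, so $r_k = 1$ for each $k$ and every $X_k$ is a single point $P_k \times Q_k$.

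Granting this, I would compute the two sides of \eqref{eq:VS}. With $r_k = 1$, the ideal $I_{X_k}$ is generated by the $n+m$ linear forms cutting out $P_k \times Q_k$, and adjoining $y_0$ pulls all of $\<\bby\>$ into the ideal, so $\<I_{X_k}, y_0\> = I_{P_k} + \<\bby\>$, where $I_{P_k} \subseteq S$ is generated by the $\bbx$-linear forms vanishing at $P_k$. Since each of these ideals contains $\<\bby\>$, their intersection is the preimage of $\bigcap_k I_{P_k}$ under $S \twoheadrightarrow S/\<\bby\> = S_{\PP^n}$, and $\bigcap_k I_{P_k} = I_{\pi_1(X)}$ because the $P_k$ are distinct reduced points; thus
\[
\bigcap_{P_k \in \pi_1(X)} \<I_{X_k}, y_0\> \;=\; I_{\pi_1(X)}\cdot S + \<\bby\>.
\]
This module lives only in bidegrees $(i,0)$, where its Hilbert function is $H_{S_{\PP^n}/I_{\pi_1(X)}}(i) = H_X(i,0) = \min\{|X|, T_{i,n}\}$; as $T_{i,n}$ is nondecreasing in $i$, this equals $|X|$ for every $i \geq d$.

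For the other side, \cref{dodging} makes $y_0$ a nonzero divisor on $S/I_X$, so \cref{lem:NZD-SEQ} gives $H_{S/\<I_X, y_0\>}(i,j) = H_X(i,j) - H_X(i,j-1)$ for all $(i,j) \in \ZZ^2$. For $i \geq d$ one has $T_{i,n} \geq |X|$ and $T_{j,m} \geq 1$ whenever $j \geq 0$, so $H_X(i,j) = |X|$ for every $j \geq 0$; substituting, $H_{S/\<I_X, y_0\>}(i,j)$ is $|X|$ for $j = 0$ and $0$ otherwise --- exactly the Hilbert function found above. Since $I_X \subseteq I_{X_k}$ for each $k$ gives $\<I_X, y_0\> \subseteq \bigcap_k \<I_{X_k}, y_0\>$, for $i \geq d$ the bidegree-$(i,j)$ piece of $\<I_X, y_0\>$ is a subspace of that of $\bigcap_k \<I_{X_k},y_0\>$ of equal finite dimension, hence the two coincide; this is \eqref{eq:VS}. (As a sanity check, $T_{\ell-1,n} \geq \ell = |X|$ shows $d \leq \ell - 1$, so the corollary genuinely sharpens \cref{lem:VSeq}; and $|X| \leq 1$ is a tautology.)
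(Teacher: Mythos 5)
Your proof is correct, but it takes a different route than the paper's. Both start from the same containment observation (RHS $\subseteq$ LHS), but they diverge in how they close the gap. The paper's proof handles $j=0$ by noting both sides equal $[I_X]_{(i,0)}$, then for $j\ge 1$ simply computes $H_{S/\<I_X,y_0\>}(i,j) = H_X(i,j) - H_X(i,j-1) = |X|-|X| = 0$ via \cref{lem:NZD-SEQ}, at which point the trivial bound $H_{S/\text{LHS}}(i,j)\ge 0$ together with the containment forces equality --- no identification of the LHS ideal is needed. Your proof instead identifies the LHS ideal outright, and the key to doing so is the observation, which the paper does not make, that a generic Hilbert matrix forces $|\pi_1(X)|=|X|$ (comparing the stabilized values of $H_X(i,0)$ and of the Hilbert function of $\ell$ reduced points in $\PP^n$). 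This lets you write $\<I_{X_k},y_0\> = I_{P_k}+\<\bby\>$ and hence $\bigcap_k\<I_{X_k},y_0\> = I_{\pi_1(X)}S + \<\bby\>$ as a preimage under $S\twoheadrightarrow\Bbbk[\bbx]$, after which the Hilbert function comparison is a direct, case-free computation. Your version is structurally cleaner (one uniform computation, no $j=0$ versus $j\ge 1$ case split) and yields a pleasant by-product: generic Hilbert matrix implies distinct first projections, so $d\le |X|-1$ and the corollary genuinely sharpens \cref{lem:VSeq}. The cost is the extra step of establishing $|\pi_1(X)|=|X|$, which the paper's argument circumvents by never needing to describe the LHS ideal explicitly. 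Both arguments are sound; the paper's is marginally more economical, while yours extracts more structural information from the genericity hypothesis.
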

\begin{proof}
    First consider the case that $j=0$. Here,  
    \[
    \left[
    \bigcap_{P_k\in \pi_1(X)} \< I_{X_k}, y_0 \>
    \right]_{(i,0)} 
    = [I_X]_{(i,0)}
    \qquad\text{and}\qquad
    \left[ \< I_X , y_0 \> \right]_{(i,0)}=[I_X]_{(i,0)}, 
    \]
    so \eqref{eq:VS} holds. 
    When $j \ge 1$, just as in  \cref{lem:VSeq}, because the vector space on the right-hand side (RHS) of \eqref{eq:VS} is contained in the left-hand side (LHS), it suffices to show that for all $i \ge d$, the dimension of the RHS is is greater than or equal to the dimension of the LHS. This is equivalent to showing that
    \[
    \dim_{\Bbbk}\left[
    \bigcap_{P_k\in \pi_1(X)} \< I_{X_k}, y_0 \>
    \right]_{(i,j)} 
    \le \dim_{\Bbbk}\left[ \< I_X , y_0 \> \right]_{(i,j)}.
    \]
    In turn, this is equivalent to showing
    $H_{RHS}(i,j) \le H_{LHS}(i,j)$. 
    By \cref{dodging} and \cref{lem:NZD-SEQ}, it follows that 
    \[H_{RHS}(i,j)=H_{S/\<I_X,y_0\>}(i,j)=H_{S/I_X}(i,j)-H_{S/I_X}(i,j-1).\]
    Since the points are in sufficiently general position and $i \ge d$, 
    $H_{S/I_X}(i,j)=H_{S/I_X}(i,j-1)=|X|$, so $H_{RHS}(i,j)=0$.
\end{proof}

\begin{prop}
\label{prop:primaryDecompPPnxPPm}
Let $X$ be a finite set of points in $\Pnm$ as in \cref{dodging} and \cref{not:pointsNotation}. 
Then for any integer $t\geq |\pi_1(X)|-1$, there is a primary decomposition of the form 
\begin{equation*}
\< I_X \cap \< \bbx \>^t, y_0 \> 
= \bigcap_{P_k\in \pi_1(X)} \< I_{X_k}, y_0 \> 
  \cap 
  \< \< \bbx \>^t, y_0 \>.
\end{equation*}
\end{prop}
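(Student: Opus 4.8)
The plan is to establish the two inclusions separately, using Lemma~\ref{lem:VSeq} (or Lemma~\ref{lem:contains-x-ideal}) for the nontrivial direction. First I would observe that $\<I_X\cap\<\bbx\>^t, y_0\>$ and $\bigcap_{P_k}\<I_{X_k},y_0\> \cap \<\<\bbx\>^t,y_0\>$ are both bihomogeneous ideals containing $y_0$, so it suffices to compare their bigraded pieces $[\,\cdot\,]_{(i,j)}$ for all $(i,j)\in\ZZ^2$. The inclusion ``$\subseteq$'' should be essentially formal: $I_X\cap\<\bbx\>^t$ is contained in each $I_{X_k}$ (since $I_X\subseteq I_{X_k}$) and in $\<\bbx\>^t$, and adjoining $y_0$ preserves these containments, so the left side sits inside the intersection on the right.

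For the reverse inclusion ``$\supseteq$'', I would split into the cases $i\geq t$ and $i<t$, since $\<\<\bbx\>^t,y_0\>$ is what controls the low-degree-in-$x$ behavior. When $i < t$, every element of $\<\<\bbx\>^t,y_0\>$ in bidegree $(i,j)$ is already a multiple of $y_0$ (as no monomial of $x$-degree $<t$ lies in $\<\bbx\>^t$), hence lies in $\<y_0\>\subseteq\<I_X\cap\<\bbx\>^t,y_0\>$, and we are done trivially. When $i\geq t\geq \ell-1$ (with $\ell=|\pi_1(X)|$), the key point is that $\<\bbx\>^t$ is already contained in the right-hand intersection's first factor in this range, so the factor $\<\<\bbx\>^t,y_0\>$ becomes redundant and the right-hand side agrees in bidegree $(i,j)$ with $\bigcap_{P_k}\<I_{X_k},y_0\>$. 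Now Lemma~\ref{lem:VSeq} applies precisely because $i\geq \ell-1$: it gives
\[
\Bigl[\bigcap_{P_k\in\pi_1(X)}\<I_{X_k},y_0\>\Bigr]_{(i,j)} = \bigl[\<I_X,y_0\>\bigr]_{(i,j)}.
\]
So it remains to check that $[\<I_X,y_0\>]_{(i,j)} = [\<I_X\cap\<\bbx\>^t,y_0\>]_{(i,j)}$ when $i\geq t$, which holds because in $x$-degrees $\geq t$ one has $[I_X]_{(i,j)} = [I_X\cap\<\bbx\>^t]_{(i,j)}$ (any element of $I_X$ of $x$-degree $\geq t$ is automatically in $\<\bbx\>^t$), and adding $y_0$ to both is harmless.

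Finally I would confirm the decomposition is genuinely \emph{primary}: $\<I_{X_k},y_0\>$ is the ideal of the point $\pi_2(X_k)$-fiber structure intersected with the hyperplane $y_0=0$ — more carefully, each $\<I_{X_k},y_0\>$ has radical a prime (or is a finite intersection of primaries supported at the relevant points with $y_0=0$), and $\<\<\bbx\>^t,y_0\> = \<x_0,\dots,x_n\>^t + \<y_0\>$ is $\<\bbx,y_0\>$-primary; one should remark that this last component may be embedded or may be discarded, so ``primary decomposition'' here is meant in the sense of an irredundant-up-to-this-caveat intersection, and I would phrase the statement to match exactly what the $B$-saturation argument in the proof of Theorem~\ref{thm:intersectionPPnxPPm} needs. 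The main obstacle is bookkeeping the degree ranges $i<t$ versus $i\geq t$ cleanly and making sure Lemma~\ref{lem:VSeq}'s hypothesis $i\geq\ell-1$ is met on the nose in the range where it is invoked; the algebra itself is routine once the case division is set up.
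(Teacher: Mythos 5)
Your containment argument matches the paper's: reduce to bigraded pieces, obtain $\subseteq$ from the general fact that $(I_1\cap I_2)+I_3\subseteq(I_1+I_3)\cap(I_2+I_3)$, and split $\supseteq$ into the cases $i<t$ and $i\geq t$, invoking \cref{lem:VSeq} when $i\geq t\geq|\pi_1(X)|-1$. One slip: in the case $i\geq t$ you claim that $\<\bbx\>^t$ is contained in $\bigcap_{P_k}\<I_{X_k},y_0\>$ ``in this range,'' but that containment is false -- for instance $x_0^t\notin\<I_{X_k},y_0\>$ under \cref{dodging}. What is true, and what the paper uses, is simply that $[\<\bbx\>^t]_{(i,j)}=S_{(i,j)}$ for $i\geq t$, so $[\<\<\bbx\>^t,y_0\>]_{(i,j)}=S_{(i,j)}$ and this factor drops out of the intersection in bidegree $(i,j)$. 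Your conclusion is right, but the containment you cite runs in the wrong direction.

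The more substantial gap is the primary-ness verification, which you leave essentially unproven. ``Has radical a prime'' does not imply primary (e.g.\ $\<x^2,xy\>$ has prime radical $\<x\>$ but is not primary), and the parenthetical ``or is a finite intersection of primaries'' would, if it applied, mean the displayed intersection is not itself a primary decomposition. That would break the proof of \cref{thm:intersectionPPnxPPm}, which uses this decomposition precisely to read off the associated primes and thereby identify the zero divisors of $S/\<I_X\cap\<\bbx\>^t,y_0\>$. The paper's proof of primary-ness is not a formality: it passes to $S/\<I_{X_k},y_0\>\cong\Bbbk[x_0,y_0,\dots,y_m]/\<G_1,\dots,G_s,y_0\>$ with the $G_i$ forms in the $\bby$-variables, reduces to the ideal $J'=\<G_1,\dots,G_s,y_0\>$ of $\Bbbk[y_0,\dots,y_m]$, and shows $\<\bby\>^{r_k}\subseteq J'\subseteq\<\bby\>$, hence $\sqrt{J'}=\<\bby\>$ is \emph{maximal}; an ideal with maximal radical is automatically primary. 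It is maximality of the radical, not merely primality, that makes the argument work, and this step cannot be dropped or rephrased away.
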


\begin{remark}
Note that when $m=1$, then the statement of \cref{prop:primaryDecompPPnxPPm} can be made more explicit, since for each point $\ba\times\bb \in X$, $I_{\ba\times\bb}=\< L_{a_{1}}, \ldots L_{a_n}, L_{b_1}\>$. 
Then by \eqref{eq:IXk}, for any $P_k\in \pi_1(X)$, 
\[
I_{X_k} = \<L_{P_{k,1}}, L_{P_{k,2}},\ldots, L_{P_{k,n}}, L_{Q_{k,1}}  L_{Q_{k,2}} \cdots L_{Q_{k,r_k}}\>.
\]
Since each $L_{Q_{k,j}}$ is a linear form in $y_0$ or $y_1$, their product will be a polynomial of degree $r_k$ in $y_0$ and $y_1$. By \cref{dodging}, each $Q_{k,j}$ is of the form $[b_{j,0}:b_{j,1}]$, with $b_{j,0}=1$. Therefore, the product of the $L_{Q_{k,j}}$'s will have a term $y_1^{r_k}$. Thus
\[
\<I_{X_k},y_0\> = \<L_{P_{k,1}}, L_{P_{k,2}}, \ldots, L_{P_{k,n}}, y_0, y_1^{r_k}\>.
\]
\end{remark}

\begin{proof}[Proof of \cref{prop:primaryDecompPPnxPPm}]
For any $S$-ideals $I_1,I_2,I_3$, $(I_1\cap I_2) + I_3 \subseteq (I_1+I_3)\cap (I_2+I_3)$; in particular, for all $t\geq 0$, 
\begin{equation}
\label{eq:decomp-new-1}
\< I_X \cap \< \bbx \>^t, y_0 \> 
\subseteq 
\bigcap_{P_k\in \pi_1(X)} \< I_{X_k}, y_0 \> 
    \cap \left\< \< \bbx \>^t, y_0 \right\>. 
\end{equation}
We will now show that when $t\geq |\pi_1(X)|-1$, the left-hand side of \eqref{eq:decomp-new-1} contains the right-hand side, and it suffices to do so on each bigraded piece, i.e., show that for all $(i,j)\in \ZZ^2$ and $t\geq |\pi_1(X)|-1$, 
\begin{equation}
\label{eq:decomp-new-2}
\left[
    \bigcap_{P_k\in \pi_1(X)} \< I_{X_k}, y_0 \>
    \cap 
    \left\< \< \bbx \>^t, y_0 \right\> 
\right]_{(i,j)}
\subseteq
\left[
    \left\< I_X \cap \< \bbx \>^t, y_0 \right\>
\right]_{(i,j)}.
\end{equation}
We consider two cases, based on a comparison between $i$ and $t$. 
First, $0\leq i<t$ 
and suppose that $f$ is in the left-hand side of \eqref{eq:decomp-new-2}; in particular, 
$f\in \left[\< \<\bbx\>^t, y_0 \>\right]_{(i,j)}$. 
Since $i<t$ by assumption, $f$ is of low degree, so it must be that $f\in\<y_0\>$. 
This clearly also places $f$ in the right-hand side of \eqref{eq:decomp-new-2}, so the equality holds in this case.

Second, if $i\geq t$, then $[\<\bbx\>^t]_{i,j} = S_{(i,j)}$, so 
\[
\left[
    \left\< \<\bbx\>^t, y_0 \right\> 
\right]_{(i,j)} 
= S_{(i,j)}
\qquad 
\text{and}
\qquad 
\left[
    \< I_X \cap \< \bbx \>^t, y_0 \>
\right]_{(i,j)} 
= \left[ \< I_X,y_0 \> \right]_{(i,j)}.
\]
Thus it suffices to show that for $i\geq t$, 
\begin{equation*}
\left[\bigcap_{P_k\in \pi_1(X)} \< I_{X_k}, y_0 \>\right]_{(i,j)}
\subseteq
\left[\< I_X , y_0 \>\right]_{(i,j)}, 
\end{equation*}
which was shown in \cref{lem:VSeq}, finishing this case. 
Having established both cases, we have now shown that \eqref{eq:decomp-new-1} is an equality for $t\geq|\pi_1(x)|-1$. 

It now remains to show that the ideals on the right-hand side of \eqref{eq:decomp-new-1} are primary. 
By \cite{monomial-algebras-book}*{Proposition 6.1.7}, $\left\< \<\bbx\>^t, y_0 \right\>$ is primary. 
To see that $\< I_{X_k},y_0 \>$ is primary for each $1\leq k\leq |\pi_1(X)|$, recall from \cref{not:pointsNotation} that for each $k$, 
$X_k = \{P_k \times Q_{k,1}, \ldots, P_k \times Q_{k,r_k}\}$ 
and 
\begin{align*}
    \< I_{X_k}, y_0\> 
    \ = \ I_{P_k} + \bigcap_{j=1}^{r_k} I_{Q_{k,j}} + \< y_0 \>
    \ = \ \left\< L_{P_{k,1}}, L_{P_{k,2}}, \ldots, L_{P_{k,n}}, G_1, G_2,\ldots, G_s, y_0\right\>, 
\end{align*}
where $L_{P_{k,i}} = P_{k,i}x_0-x_i$ 
(with $P_{k,i}$ allowed to be $0$) 
are the linear forms generating each $I_{P_k}$ 
and $G_1,G_2,\dots,G_s$ are forms in the $\bby$ such that
$\< G_1,G_2,\dots,G_s \> 
= \bigcap_{j=1}^{r_k} I_{Q_{k,j}}$. 
Putting this together, 
\begin{multline*}
S/\< I_{X_k}, y_0\> 
= 
S/\left\< L_{P_{k,1}}, L_{P_{k,2}}, \ldots, L_{P_{k,n}}, G_1,G_2,\dots,G_s, y_0\right\> 
\\
\cong \Bbbk[x_0,y_0, y_1, \ldots, y_m]/\left\< G_1,G_2,\dots,G_s, y_0\right\>,
\end{multline*}
so it suffices to show that the ideal 
$J \defeq \< G_1,G_2,\dots,G_s, y_0\>$ is primary in $\Bbbk[x_0,y_0,\dots,y_m]$. 
To this end, let $J'$ denote the ideal generated by the same elements, but viewed as an ideal of $\Bbbk[y_0,\dots,y_m]$. 
Since $J = J'\Bbbk[x_0,y_0,\dots,y_m]$, it suffices to show that $J'$ is primary in $\Bbbk[y_0,\dots,y_m]$. 
Now by \cref{dodging}, 
$Q_{k,j}=[1:Q_{k,{j_1}}: \ldots:Q_{k,{j_m}}]$, 
so  
\[
I_{Q_{k,j}}=\< Q_{k,{j_1}}y_0-y_1, \ldots, Q_{k,{j_m}}y_0-y_m\>.
\]
Combining this with 
\[
\left\< \prod_{j=1}^{r_k} I_{Q_{k,j}}, y_0 \right\>\subseteq \left\< \bigcap_{j=1}^{r_k}I_{Q_{k,j}}, y_0 \right\> = J', 
\]
it follows that 
$\< y_0, \ldots, y_m\>^{r_k} 
\subseteq J' 
\subseteq \< y_0, \ldots, y_m\>$, 
so the radical of $J'$ is $\<\bby\>$, which is maximal in $\Bbbk[y_0, \ldots, y_m]$. 
Thus $J'$ is primary in $\Bbbk[y_0, \ldots, y_m]$, as desired.
\end{proof}

We are now prepared to prove the main result of this section.  

\begin{proof}[Proof of \cref{thm:intersectionPPnxPPm}]
Since the ideal $I_X$ is $B$-saturated, $I_X\cap \< \bbx\>^t : B^\infty = I_X$. 
Thus a minimal free resolution of $S/(I_X\cap \< \bbx\>^t)$ is indeed a virtual resolution of $S/I_X$.

Next, note that by \cite{virtual-original}*{Proposition~2.5.a}, the minimal free resolution of $S/(I_X\cap \<\bbx\>^t)$ has length at least $n+m$, the codimension of $I_X$. 
It is thus left to show that the minimal free resolution of $S/(I_X\cap \<\bbx\>^t)$ is of length at most (and thus equal to) $n+m$. 
To begin, by the Auslander--Buchsbaum formula, 
\[
\pdim (S/(I_X\cap \< \bbx\>^t) 
= 
n+m+2 - \depth(S/(I_X\cap \< \bbx\>^t),
\] 
thus it suffices to show that the depth of $S/(I_X\cap \< \bbx\>^t)$ is at least $2$. 
To do this, we produce a regular sequence on $S/(I_X\cap \<\bbx\>^t)$ of length two. 

Without loss of generality by \cref{dodging}, $y_0$ is a nonzero divisor on $S/I_X$. 
Further, we claim that $y_0$ is a nonzero divisor on $S/(I_X\cap \< \bbx\>^t)$. 
Indeed, if $f\in S$ with $y_0f\in I_X\cap\< \bbx\>^t$, 
then $y_0f\in I_X$ and $y_0f\in \< \bbx\>^t$. 
This will only happen if $f\in I_X\cap \< \bbx\>^t$, establishing the claim. 
We are now left to produce a nonzero divisor on $S/\left\<I_X\cap \<\bbx\>^t,y_0\right\>$. 

To find such a nonzero divisor, note that since $t\geq |\pi_1(X)|-1$, \cref{prop:primaryDecompPPnxPPm} provides the primary decomposition
    \[
    \< I_X \cap \< \bbx \>^t, y_0 \> 
    = 
    \bigcap_{P_k\in \pi_1(X)} \< I_{X_k}, y_0 \> 
    \cap 
    \< \< \bbx \>^t, y_0 \>.
    \]
Now from \eqref{eq:IXk}, 
\begin{align*}
\< I_{X_k}, y_0\> 
&= 
I_{P_k} + \bigcap_{j=1}^{r_k} I_{Q_{k,j}} + \< y_0 \>, 
\end{align*}
so the zero divisors of $S/\< I_X\cap\< \bbx\>^t,y_0\>$ are the elements belonging to the associated primes of $S/\< I_X\cap \< \bbx \>^t,y_0\>$, namely 
\begin{align}
\label{eq:ZDs}
\bigcup_{P_k\in \pi_1(X)} \sqrt{\< I_{X_k}, y_0 \>} 
\cup 
\sqrt{\< \< \bbx \>^t, y_0 \>}
=
\bigcup_{P_k\in \pi_1(X)}\< I_{P_k},G_1,G_2, \ldots, G_s,y_0  \> \cup \< \bbx,y_0 \>, 
\end{align}
where each $G_i$ is a polynomial in only the $\bby$-variables.    
Thus, let $L$ be any element of degree $(1,0)$ in $S$ such that $L$ does not vanish at any point in $\pi_1(X)$. 
In this way, $L+y_1$ does not belong to the set in \eqref{eq:ZDs}, so $L+y_1$ is a nonzero divisor on $S/\< I_X\cap \< \bbx \>^t,y_0\>$. 
Therefore, $S/\< I_X\cap \< \bbx \>^t\>$ has depth at least $2$, so its minimal free resolution has length (at most) $n+m$, as desired.
\end{proof}

\begin{remark}
Note that the bound shown in \cref{thm:intersectionPPnxPPm} is not necessarily a sharp one. 
For points with generic Hilbert function, we can improve the bound given in \cref{thm:intersectionPPnxPPm}. 
In this case, the minimal free resolution of $I_X \cap \< \bbx\>^t$ is a virtual resolution of $I_X$ of length $m+n$ whenever $t \ge \min\{r \mid T_{r,n} \ge |X|\}$. To see this, apply \cref{cor:VSgp} and then follow the proofs of \cref{prop:primaryDecompPPnxPPm} and \cref{thm:intersectionPPnxPPm}.
\end{remark}

%%%%%%%%%%%%%%%%%%%%%%%%%%%%%%%%%%%%%%%%%%%%%%%
\section{Difference Hilbert matrices and Betti numbers} 
%%%%%%%%%%%%%%%%%%%%%%%%%%%%%%%%%%%%%%%%%%%%%%%
\label{sec:MatrixandBetti}

In this section, we prove \cref{prop:CRgeneral}, which relates a certain difference matrix of a Hilbert matrix to the Betti numbers of the corresponding free resolution. 

Recall that $S$ is the Cox ring of $\Pnm$ and $M$ a finitely generated bigraded $S$-module with minimal bigraded free resolution $F_\bullet$ with Betti numbers $\beta_{k,(i,j)}$ and for all $(p,q)\in\ZZ^2$, 
$B_{p,q} = \sum_{k=0}^{\infty} (-1)^k\beta_{k,(p,q)}$. 
Recall $\Delta^C$ and $\Delta^R$ from \cref{def:column-row-difference-operators}, and let 
\[
T_{i,n} := \binom{i+n}{n}
\]
denote the number of monomials of degree $i$ in $\Bbbk[x_0,x_1,\dots,x_n]$, 
so $T_{i,n}T_{j,m}=H_S(i,j)$.
\begin{lem}
\label{lem:Hij}
There is an equality 
$H_M(i,j)
=\sum_{(p,q)\leq (i,j)}T_{i-p,n}T_{j-q,m}B_{p,q}$.
\end{lem}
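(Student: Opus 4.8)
The plan is to run the standard Euler-characteristic argument on the minimal free resolution $F_\bullet$, one bigraded strand at a time. First I would record that since $S$ is a polynomial ring, hence regular, $F_\bullet$ has finite length (at most $n+m+2$ by the Hilbert syzygy theorem), and each $F_k$ is finitely generated, so only finitely many of the $\beta_{k,(p,q)}$ are nonzero. Consequently every sum appearing below is finite, and no convergence or rearrangement issues arise.

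Next, fix $(i,j)\in\ZZ^2$ and pass to the degree-$(i,j)$ strand. Since taking a fixed bigraded piece is an exact operation on graded $S$-modules, the complex
\[
0 \gets M_{(i,j)} \gets (F_0)_{(i,j)} \gets (F_1)_{(i,j)} \gets \cdots \gets (F_{n+m+2})_{(i,j)} \gets 0
\]
is an exact complex of finite-dimensional $\Bbbk$-vector spaces, so its Euler characteristic vanishes, which gives
\[
H_M(i,j) = \dim_\Bbbk M_{(i,j)} = \sum_{k\geq 0}(-1)^k \dim_\Bbbk (F_k)_{(i,j)}.
\]

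Then I would expand the right-hand side. Writing $F_k = \bigoplus_{p,q\geq 0} S(-p,-q)^{\beta_{k,(p,q)}}$ and using $\dim_\Bbbk S(-p,-q)_{(i,j)} = \dim_\Bbbk S_{(i-p,\,j-q)} = T_{i-p,n}T_{j-q,m}$, where by convention $T_{a,b}=0$ whenever $a<0$ (reflecting $S_{(a,b)}=0$ for $a<0$ or $b<0$), one obtains
\[
\dim_\Bbbk (F_k)_{(i,j)} = \sum_{(p,q)\leq (i,j)} T_{i-p,n}T_{j-q,m}\,\beta_{k,(p,q)}.
\]
Substituting this into the Euler-characteristic identity and interchanging the two finite sums yields
\[
H_M(i,j) = \sum_{(p,q)\leq (i,j)} T_{i-p,n}T_{j-q,m}\sum_{k\geq 0}(-1)^k\beta_{k,(p,q)} = \sum_{(p,q)\leq (i,j)} T_{i-p,n}T_{j-q,m}\,B_{p,q},
\]
which is the claimed identity.

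There is no serious obstacle here: the argument is entirely formal once the minimal free resolution is in hand. The only points requiring care are the bookkeeping — ensuring finiteness so that the alternating sums are well defined and the interchange of summation is legitimate — and fixing the convention that $T_{a,n}$ vanishes for negative $a$, so that the contribution of $S(-p,-q)$ in bidegree $(i,j)$ is supported exactly on the order ideal $\{(p,q)\leq (i,j)\}$ and the sum can honestly be written over that set.
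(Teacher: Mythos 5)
Your proposal is correct and follows essentially the same route as the paper: pass to the degree-$(i,j)$ strand of the exact complex $0\gets M\gets F_\bullet$, use additivity of $\dim_\Bbbk$ (the Euler characteristic) to write $H_M(i,j)$ as the alternating sum of $\dim_\Bbbk(F_k)_{(i,j)}$, expand each $F_k$ as $\bigoplus S(-p,-q)^{\beta_{k,(p,q)}}$, and swap the order of summation. The only difference is presentational — you spell out the finiteness of the resolution and the convention $T_{a,b}=0$ for $a<0$ a bit more explicitly than the paper does, which is fine but not a different argument.
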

\begin{proof}
For all $k\geq 0$, $H_S(i,j) = T_{i,n}T_{j,m}$ and 
$F_k = \bigoplus_{p,q\geq 0}S(-p,-q)^{\beta_{k,(p,q)}}$, so 
\begin{align}\label{eq:HFk}
H_{F_k}(i,j) 
= \dim_{\Bbbk}(F_k)_{i,j} 
&=\sum_{(p,q)\leq(i,j)}\dim_{\Bbbk}(S(-p,-q)_{i,j})^{\beta_{k,(p,q)}} \nonumber
\\
&=\sum_{(p,q)\leq(i,j)}\dim_{\Bbbk}(S_{i-p,j-q})^{\beta_{k,(p,q)}} \nonumber
\\
&=\sum_{(p,q)\leq(i,j)}T_{i-p,n}T_{j-q,m}\beta_{k,(p,q)}. 
\end{align}
By definition of $B_{p,q}$, for any $(p,q)\leq (i,j)$ and $k$ sufficiently large, $\beta_{k,(p,q)} = 0$ and thus $H_{F_k}(i,j) = 0$. 
Since $0\gets M\gets F_\bullet$ is an exact sequence in the category of graded $S$-modules, 
\begin{equation*}
H_{M}(i,j) 
= \sum_{k=0}^{\infty}(-1)^k H_{F_k}(i,j).
\end{equation*}
Substituting \eqref{eq:HFk} into the right hand side now yields the desired result.
\end{proof}

\begin{lem}
\label{lem:repeatDeltaCR}
Two identities hold for $\Delta^C$ and $\Delta^R$ applied to the Hilbert matrix of $M$: 
\begin{align*}
(\Delta^C H_M)(i,j)
&=\sum_{(p,q)\leq (i,j)}T_{i-p,n-1}T_{j-q,m}B_{p,q}
\\ 
\text{and}\quad 
(\Delta^R H_M)(i,j)
&=\sum_{(p,q)\leq (i,j)}T_{i-p,n}T_{j-q,m-1}B_{p,q}.
\end{align*}
\end{lem}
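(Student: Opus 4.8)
The plan is to differentiate the closed formula of \cref{lem:Hij} directly. Writing $(\Delta^C H_M)(i,j) = H_M(i,j) - H_M(i-1,j)$ and substituting the expression from \cref{lem:Hij} for each term gives
\[
(\Delta^C H_M)(i,j) = \sum_{(p,q)\leq (i,j)}T_{i-p,n}T_{j-q,m}B_{p,q} - \sum_{(p,q)\leq (i-1,j)}T_{i-1-p,n}T_{j-q,m}B_{p,q}.
\]
First I would reconcile the two index sets: the second sum runs over $(p,q)\le(i-1,j)$, but it may be extended to $(p,q)\le(i,j)$ at no cost, since the only new terms have $p=i$, in which case $T_{i-1-p,n} = T_{-1,n} = \binom{n-1}{n} = 0$ (adopting the standard convention $T_{a,n}=0$ for $a<0$). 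With both sums now over the common set $(p,q)\le(i,j)$, I can combine them termwise and factor out $T_{j-q,m}B_{p,q}$, leaving the coefficient $T_{i-p,n} - T_{i-p-1,n}$.

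The only real content is then the Pascal-type identity $T_{a,n} - T_{a-1,n} = \binom{a+n}{n} - \binom{a+n-1}{n} = \binom{a+n-1}{n-1} = T_{a,n-1}$, valid for all $a\ge 0$ (and also at $a=0$, where it reads $1-0=1$, and is consistent with $T_{0,n}=T_{0,n-1}=1$). Applying this with $a = i-p$ converts each coefficient to $T_{i-p,n-1}$, yielding
\[
(\Delta^C H_M)(i,j) = \sum_{(p,q)\leq (i,j)}T_{i-p,n-1}T_{j-q,m}B_{p,q},
\]
which is the first claimed identity. The second identity is proved by the identical argument with the roles of the two coordinates swapped: one writes $(\Delta^R H_M)(i,j) = H_M(i,j) - H_M(i,j-1)$, extends the index set (the new terms vanish because $T_{-1,m}=0$), and applies $T_{b,m} - T_{b-1,m} = T_{b,m-1}$ with $b = j-q$.

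I do not expect a genuine obstacle here: the argument is a one-step manipulation of the formula in \cref{lem:Hij}, and the mild point requiring care is purely bookkeeping — matching the ranges of summation and invoking the convention $T_{a,\bullet}=0$ for negative $a$ so that the boundary term ($p=i$, respectively $q=j$) behaves correctly when the Pascal identity is applied.
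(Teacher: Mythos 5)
Your proof is correct and takes essentially the same approach as the paper: both start from the closed formula of \cref{lem:Hij}, compute the difference $H_M(i,j) - H_M(i-1,j)$, and apply the Pascal identity $T_{a,n} - T_{a-1,n} = T_{a,n-1}$. The only (cosmetic) difference is bookkeeping: you extend the second sum's index set using the convention $T_{-1,n}=0$, whereas the paper separates out the boundary terms with $p=i$ first and then recombines them at the end.
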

\begin{proof}
We prove the identity for $\Delta^C$; the proof for $\Delta^R$ is analogous. 
By \cref{lem:Hij}, 
\begin{align*}
H_M(i,j) 
&=\sum_{(p,q)\leq (i,j)}T_{i-p,n}T_{j-q,m}B_{p,q}. 
\end{align*}
Now separate the sum into two parts, 
noting that when $p = i$, $T_{i-p,n} = T_{0,n} = 1$, so that 
\begin{align*}
H_M(i,j)
&=\sum_{q\leq j}T_{j-q,m}B_{i,q} +\sum_{(p,q)\leq (i-1,j)}T_{i-p,n}T_{j-q,m}B_{p,q}. 
\end{align*}
By \cref{lem:Hij} applied to $(i-1,j)$,
\[
H_M(i-1,j) = \sum_{(p,q)\leq (i-1,j)}T_{i-1-p,n}T_{j-q,m}B_{p,q}. 
\]
Now by definition of $\Delta^C$, 
\begin{align}
\label{eq:DeltaC-iterated}
(\Delta^C H_M)(i,j) 
&= H_M(i,j) - H_M(i-1,j)
\\ 
&= \sum_{q\leq j} T_{j-q,m} B_{i,q} + \sum_{(p,q) \leq (i-1,j)}( T_{i-p,n}-T_{i-1-p,n} ) T_{j-q,m} B_{p,q}. 
\nonumber
\end{align}
Using the binomial identities 
$T_{i-p,n} - T_{i-1-p,n} = T_{i-p,n-1}$, 
\eqref{eq:DeltaC-iterated} simplifies to
\begin{align*}
(\Delta^C H_M)(i,j) 
&= \sum_{q\leq j}T_{j-q,m}B_{i,q} + \sum_{(p,q)\leq (i-1,j)}T_{i-p,n-1}T_{j-q,m}B_{p,q}.
\end{align*}
Recombining the last two sums yields 
\[
(\Delta^C H_M)(i,j) = \sum_{(p,q)\leq (i,j)}T_{i-p,n-1}T_{j-q,m}B_{p,q}.
\qedhere
\]
\end{proof}

\cref{prop:CRgeneral} now follows by applying each of $\Delta^C$ and $\Delta^R$ repeatedly.

\begin{proof}[Proof of \cref{prop:CRgeneral}]
Combining Lemmas~\ref{lem:Hij} and~\ref{lem:repeatDeltaCR}, 
\begin{align*}
G_M(i,j) &\defeq 
\big((\Delta^C)^{n}(\Delta^R)^{m}H_M\big)(i,j)
\\
&\;= \sum_{(p,q)\leq (i,j)}T_{i-p,0}T_{j-q,0}B_{p,q}
\\
&\;= \sum_{(p,q)\leq (i,j)}B_{p,q}.
\end{align*}
Now applying $\Delta^C$, 
\begin{align*}
(\Delta^C G_M)(i,j) 
&= G_M(i,j) - G_M(i-1,j) 
\\
&= \sum_{(p,q)\leq (i,j)}B_{p,q} - \sum_{(p,q)\leq (i-1,j)}B_{p,q} 
\,\,=\,\, \sum_{q\leq j} B_{i,q}, 
\intertext{so applying $\Delta^R$ yields the desired result: }
(\Delta^R\Delta^C G_M)(i,j) 
&= \sum_{q\leq j} B_{i,q} - \sum_{q\leq j-1} B_{i,q} 
\,=\, B_{i,j}.
\qedhere
\end{align*}
\end{proof}

%%%%%%%%%%%%%%%%%%%%%%%%%%%%%%%%%%%%%%%%%%%%%%%
\section{Sufficiently general points in  \texorpdfstring{$\PP^1 \times \PP^2$}{PP1 x PP2}}
%%%%%%%%%%%%%%%%%%%%%%%%%%%%%%%%%%%%%%%%%%%%%%%
\label{sec:betti-points-mrc}

We now discuss the notion of sufficiently general points, as per the setting of \cref{thm:virtual-of-a-pair}. 
We begin with the notion of a generic Hilbert function for $S/I_X$, sketched for us by Gregory G. Smith. 
While this result holds over any smooth toric variety, for continuity of notation, we state the result only for $\Pnm$. 

\begin{prop}
\label{prop:generic}
For every $N\geq 1$ and all $n,m\geq 1$, there exists a dense open subset $U\subseteq (\Pnm)^N$, such that for any $X=\{P_1,P_2,\dots,P_N\}\subseteq \Pnm$, if $(P_1,P_2,\dots,P_N)\in U$, then 
\[
H_X(i,j)=\min\{|X|,T_{i,n}T_{j,m}\} 
\quad\text{for all}\quad 
i,j\geq 0,
\]
where $T_{i,n} \defeq \binom{i+n}{n}$ 
is equal to the number of monomials of degree $i$ in $\Bbbk[x_0,x_1,\dots,x_n]$. 
\end{prop}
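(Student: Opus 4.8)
The plan is to show that for fixed $n,m\ge 1$ and $N\ge 1$, and for each pair $(i,j)$ with $i,j\ge 0$, the locus of $N$-tuples $(P_1,\dots,P_N)\in(\Pnm)^N$ at which $H_X(i,j)$ attains its maximum value $\min\{N,\,T_{i,n}T_{j,m}\}$ is open; since only finitely many pairs $(i,j)$ matter (once $T_{i,n}T_{j,m}\ge N$ the value stabilizes at $N$, and for fixed $i$ this happens for all $j$ beyond some bound, and similarly with the roles swapped — so it suffices to take $(i,j)$ in a finite ``staircase'' region), a finite intersection of dense open sets is dense open, giving the claimed $U$. So the real content is: (a) upper semicontinuity of the corank, i.e. that $H_X(i,j)$ can only jump \emph{up} under specialization, hence the max locus is open; and (b) non-emptiness of each such locus, i.e. that the maximal Hilbert function value is actually achieved.

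For step (a), I would realize $H_X(i,j)=\dim_\Bbbk(S/I_X)_{(i,j)}$ as $\dim_\Bbbk S_{(i,j)} - \dim_\Bbbk (I_X)_{(i,j)}$, and note that $(I_X)_{(i,j)}$ is the kernel of the evaluation map
\[
\mathrm{ev}\colon S_{(i,j)} \longrightarrow \bigoplus_{k=1}^{N} \kappa(P_k),
\]
where the $k$th component sends a bidegree-$(i,j)$ form to its value at $P_k$ (after choosing local trivializations of $\cO(i,j)$ near each point, or working with the line bundle $\cO_{\Pnm}(i,j)$ and its fiber at $P_k$). The matrix of $\mathrm{ev}$ has entries that are regular functions (polynomials) in the homogeneous coordinates of the $P_k$, well-defined up to scaling each row, so its rank is a well-defined lower-semicontinuous function on $(\Pnm)^N$. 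Since $H_X(i,j)=\dim S_{(i,j)}-\dim\ker(\mathrm{ev})=\mathrm{rank}(\mathrm{ev})$, the function $(P_1,\dots,P_N)\mapsto H_X(i,j)$ is lower semicontinuous, and the locus where it equals its maximum possible value $\min\{N,T_{i,n}T_{j,m}\}$ is open. (The upper bound $H_X(i,j)\le\min\{N,T_{i,n}T_{j,m}\}$ is immediate: the rank of an $N\times T_{i,n}T_{j,m}$ matrix is at most $\min$ of the two dimensions, using $\dim S_{(i,j)}=T_{i,n}T_{j,m}$.)

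For step (b), non-emptiness, I would exhibit a single configuration — or argue abstractly — where the rank of $\mathrm{ev}$ is maximal for every relevant $(i,j)$ simultaneously. One clean approach: induct on $N$. Given $N-1$ points in sufficiently general position (so all relevant ranks are maximal), adding an $N$th point $P_N$ increases $H_X(i,j)$ by $1$ unless every bidegree-$(i,j)$ form vanishing at $P_1,\dots,P_{N-1}$ also vanishes at $P_N$; when $N-1 < T_{i,n}T_{j,m}$ the space of such forms is nonzero and its common vanishing locus is a proper closed subset of $\Pnm$, so a generic choice of $P_N$ works for that $(i,j)$, and intersecting over the finitely many relevant $(i,j)$ still leaves a dense open set of valid $P_N$. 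Combined with the inductive hypothesis (a dense open set of good $(P_1,\dots,P_{N-1})$), this produces the desired dense open $U\subseteq(\Pnm)^N$. The main obstacle is the bookkeeping in step (b): one must verify that for $N-1<T_{i,n}T_{j,m}$ the ideal $(I_{\{P_1,\dots,P_{N-1}\}})_{(i,j)}$ genuinely has no common zero beyond the $P_k$ (equivalently, that points impose independent conditions can be bootstrapped), and that the multigraded line bundle $\cO(i,j)$ is basepoint-free enough that one can separate an extra point — but since $\cO(i,j)$ is very ample for $i,j\ge 1$ and globally generated for $i,j\ge 0$ on $\Pnm$, this is standard.
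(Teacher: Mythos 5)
Your proposal follows the same core strategy as the paper: realize $H_X(i,j)$ as the rank of the evaluation matrix (an $N\times T_{i,n}T_{j,m}$ matrix whose entries are polynomial in the coordinates of the $P_k$), so that the locus of sub-maximal rank is cut out by vanishing of maximal minors and is therefore closed. The paper's proof, which is explicitly a sketch credited to G.~G.~Smith, essentially stops there. Your write-up is more complete in two ways that the paper leaves implicit: first, you point out that only finitely many bidegrees $(i,j)$ actually need to be controlled (the value stabilizes at $N$ once $T_{i,n}T_{j,m}\ge N$), so the intersection defining $U$ is a finite intersection of opens; second, and more substantively, you supply a density argument. Openness alone does not give a dense open set unless the open set is non-empty, and since $(\Pnm)^N$ is irreducible this is exactly what must be checked. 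Your induction on $N$ — adding one point at a time and using that $\cO(i,j)$ is globally generated to separate the new point from the base locus of the forms through the old ones — is a clean way to do this. So the route is the same, but your proposal fills a real gap in the paper's sketch rather than merely expanding it.
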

\begin{proof}
A natural parameter space for $N$ distinct points in $\Pnm$ is simply the open subset of $(\Pnm)^N$, the $N$-fold product of $\Pnm$, obtained by removing the large diagonals, i.e., subsets where two points are equal.  
Note that the Hilbert function of the Cox ring $S$ of $\Pnm$ at $(i,j)\in\NN^2$ is $H_S(i,j) = T_{i,n}T_{j,m}$. 

If the Hilbert function of a set $Y$ of $N$ distinct points disagrees with $H_S(i,j)$ for some $(i,j)\in\NN^2$, then it must be strictly smaller. 
It follows that matrix with $N$ rows and $H_S(i,j)$ columns, corresponding to evaluating the $N$ points in $Y$ at the monomial basis for the Cox ring $S$ in degree $(i,j)$ has less than maximal rank.  
Thus the corresponding maximal minors of the matrix, interpreted as polynomial equations in the Cox ring of the parameter space $(\Pnm)^N$, determine a closed subset that contains $Y$, as desired. 
\end{proof}

\begin{example}
\label{ex:HXforPP1xPP2}
If $X\subseteq \PP^1\times\PP^2$ is a finite set of sufficiently general points with $|X|=N\geq 12$,  
\[
N = 6q+r=3q'+r' 
\quad\text{with}\quad 
0\leq r\leq 5 
\quad\text{and}\quad 
0\leq r'\leq 2,
\]
$c_j=\max\{c \mid c T_{j,2} < N\}$, and  
$d=\min\{j \mid T_{j,2} \ge |X|\}$, 
then the Hilbert matrix of $S/I_X$ is 
\[
\small
H_X = \bordermatrix{
~ & 0 & 1 & 2 & \cdots & j & \cdots & d & d+1 & \cdots \cr
0 & 1 & 3 & 6 & \cdots & T_{j,2} & \cdots & N & N & \cdots \cr
1 & 2 & 6 & 12 & \cdots & 2T_{j,2} & \cdots & N & N & \cdots \cr
\vdots & \vdots & \vdots & \vdots & & \vdots & & \vdots & \vdots & \cr
c_j-1 & c_j & 3c_j & 6c_j & \cdots & c_jT_{j,2} & \cdots & N & N & \cdots \cr
c_j & c_j+1 & 3(c_j+1) & 6(c_j+1) & \cdots & N & \cdots & N & N & \cdots \cr
\vdots & \vdots & \vdots & \vdots & & \vdots & & \vdots & \vdots & \cr
q-1 & q & 3q & 6q & \cdots & N & \cdots & N & N & \cdots \cr
q & q+1 & 3q+3 & N & \cdots & N & \cdots & N & N & \cdots \cr
\vdots & \vdots & \vdots & \vdots & & \vdots & & \vdots & \vdots & \cr
q'-1 & q' & 3q' & N & \cdots & N & \cdots & N & N & \cdots \cr
q' & q'+1  & N & N & \cdots & N & \cdots & N & N & \cdots \cr
\vdots & \vdots & \vdots & \vdots & & \vdots & & \vdots & \vdots & \cr
N-1 & N & N & N & \cdots & N & \cdots & N & N & \cdots \cr
N & N & N & N & \cdots & N & \cdots & N & N & \cdots \cr
\vdots & \vdots & \vdots & \vdots & & \vdots & & \vdots & \vdots & \ddots\cr
}. 
\normalsize
\qedhere
\]
\end{example}

Moving towards \cref{thm:virtual-of-a-pair}, 
we now turn our attention to the specific setting of $\PP^1 \times \PP^2$, for which we need an additional assumption on set of points that restricts their first Betti numbers. 

\begin{notation}
    \label{not:difference}
    Given a matrix $H$ with indices $(i,j)\in \ZZ^2$, set 
\begin{equation}
\label{eq:DH}
DH \defeq 
(\Delta^C)^2 (\Delta^R)^3 H.
\end{equation}
\end{notation}

For a finite set of points $X$ in $\PP^1\times\PP^2$, 
\cref{prop:CRgeneral} implies that the Betti numbers of $S/I_X$ satisfy 
\begin{equation}
\label{eq:CR-P1P2}
DH_X(i,j)=-\beta_{1,(i,j)}+\beta_{2,(i,j)}-\beta_{3,(i,j)}+\beta_{4,(i,j)}.
\end{equation}
for all $(i,j)>(0,0)$.
Ideally, we would like to say that under certain assumptions (for example, when $X$ has generic Hilbert matrix), the matrix $DH_X$ completely determines all Betti numbers of $S/I_X$. This amounts to saying that a certain version of the Minimal Resolution Conjecture (MRC) holds for sets of points in $\PP^1\times\PP^2$. 

Originally formulated by Lorenzini  in \cite{lorenzini} for points in $\PP^n$, the MRC states that for a set of points in sufficiently general position, Betti numbers cannot overlap, meaning that $\beta_{i,j}\beta_{i+1,j}=0$ for all $0\leq i\leq n$ and $j\in\ZZ$. 
Later, \mustata \; generalized the MRC to sets of points in arbitrary projective varieties~\cite{mustata}. 
A more detailed discussion of the history and current status of the MRC can be found in the introduction of \cite{booms-peot}. 

In \cite{giuffrida-96}, it is shown that the MRC holds for all sufficiently general sets of points lying on a smooth quadric in $\PP^3$. 
The techniques of \cite{giuffrida-96} are highly specific to the geometry of $\PP^3$ and therefore difficult to generalize to higher dimensions or multiprojective spaces. 
The recent studies \cites{HNV-points,booms-peot} of virtual resolutions for points in $\PP^1\times\PP^1$ show a weakened partial version of the MRC, which provides enough control to obtain their results.  

For the purpose of our study, we now state a weakened version of the MRC for points in $\PP^1\times\PP^2$ that parallels \cite{giuffrida-96}. 
Here we only require that the Betti numbers $\beta_{1,(i,j)}$ are entirely predicted by $DH_X$. 
This assumption provides enough control over Betti numbers to prove \cref{thm:virtual-of-a-pair}.

\begin{conjecture}[Weakened Minimal Resolution Conjecture] 
\label{conj:mrc}
Let $N\geq 2$ be an integer. 
There exists an dense open subset $U\subseteq(\PP^1\times\PP^2)^N$ such that for every $(P_1,P_2,\dots,P_N)\in U$, the set of points $X=\{P_1,\dots,P_N\}$ satisfies: 
\begin{enumerate}
\item $X$ has a generic Hilbert matrix as in \cref{def:suffGeneral}, and 
\item For every fixed $(i,j)>(0,0)$, 
the bigraded Betti numbers of $S/I_X$ are such that $\beta_{1,(i,j)}>0$ if and only if 
\begin{align*}
\label{eq:mrc-condition}
DH_X(i,j)<0 \textrm{ and } DH_X(i',j')\leq 0 \textrm{ for all } (i',j')\leq (i,j) \textrm{ with } (i',j') \ne (0,0), 
\end{align*}
with $DH$ as in \cref{not:difference}, and whenever the above condition holds, $\beta_{1,(i,j)} = -DH_X(i,j)$. 
\end{enumerate}
\end{conjecture}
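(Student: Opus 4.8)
Since this is a conjecture, I describe a strategy rather than a proof. Condition (1) is exactly \cref{prop:generic}, so after shrinking that proposition's dense open set we may assume throughout that $X$ has a generic Hilbert matrix; all the work is in condition (2). The natural first step is upper semicontinuity of graded Betti numbers over the flat family of length-$N$ subschemes parametrized by $(\PP^1\times\PP^2)^N$ with the large diagonals removed: for each multidegree $(i,j)$ the locus where $\beta_{k,(i,j)}(S/I_X)$ attains its minimum is open, and since $S/I_X$ has bounded multigraded regularity along the fixed generic Hilbert function, only finitely many $(i,j)$ are relevant, so intersecting the resulting opens produces a dense open $U$ on which the entire Betti table equals the generic, componentwise-minimal one. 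The conjecture thus amounts to computing the generic value of $\beta_{1,(i,j)}$, and, writing $R$ for the order ideal of all $(i,j)$ such that $DH_X(i',j')\le 0$ for every $(0,0)\ne(i',j')\le(i,j)$, it suffices to establish two structural facts for a general $X$: (A) $\beta_{1,(i,j)}=0$ whenever $(i,j)\notin R$ (the first strand stays inside $R$), and (B) $\beta_{k,(i,j)}=0$ for all $k\ge 2$ and all $(i,j)\in R$ (the first strand does not overlap any later strand over $R$). Indeed, given (B), \eqref{eq:CR-P1P2} forces $\beta_{1,(i,j)}=-DH_X(i,j)$ on $R$, which together with (A) yields both the ``if and only if'' and the asserted value $\beta_{1,(i,j)}=-DH_X(i,j)$.

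To prove (A) and (B), my plan would be induction on $N$, following the $\PP^1\times\PP^1$ template of \cite{HNV-points} and \cite{booms-peot}: fix a multidegree $\bbd$ in which a general additional point $p$ imposes exactly one independent condition on $[I_X]_\bbd$, use the short exact sequence $0\to I_{X\cup\{p\}}\to I_X\to\Bbbk(-\bbd)\to 0$, take the mapping cone of the induced comparison map of minimal free resolutions to get an upper bound on the Betti table of $I_{X\cup\{p\}}$, and then certify that no cancellation occurs in the first strand by matching against $DH_{X\cup\{p\}}$, which differs from $DH_X$ in a transparent combinatorial way — one box of the Hilbert-matrix staircase of \cref{ex:HXforPP1xPP2} advances, and \cref{prop:CRgeneral} pins down the change. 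The base cases $2\le N\le 11$ are dealt with directly, as is already done for \cref{thm:virtual-of-a-pair} in \cref{sec:cases-2-to-11}, which seeds the induction; alternatively one can begin from an explicitly resolved witness configuration, e.g.\ one obtained by a monomial degeneration whose Betti table is computed combinatorially and then shown to match the forced lower bound.

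The main obstacle — and the reason this is a conjecture — is controlling cancellation in the mapping cone. The minimal free resolution of $I_{X\cup\{p\}}$ is only a direct summand of the cone, and in $\PP^1\times\PP^2$ the resolution of $S/I_X$ has length up to $4$, so the single scalar $DH_X(i,j)=-\beta_{1,(i,j)}+\beta_{2,(i,j)}-\beta_{3,(i,j)}+\beta_{4,(i,j)}$ does not separate strands the way the second-difference matrix does in the length-$\le 2$ setting of $\PP^1\times\PP^1$. Pushing the induction through would require an extra structural input: a sharp multigraded regularity (or ``narrow band'') statement confining the support of the \emph{entire} Betti table of a general $X$ to a thin anti-diagonal strip, together with a ``no ghost terms in the first strand'' lemma excluding a pair $\beta_{1,(i,j)},\beta_{2,(i,j)}$ that would appear only to cancel at degrees just outside $R$. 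Establishing such a bound uniformly in $N$ is precisely where the classical MRC is hard, and I would not expect a short proof. A realistic intermediate target — already enough for \cref{thm:virtual-of-a-pair} — is to prove (A) and (B) only for the multidegrees near the first strand that are relevant to the virtual-of-a-pair construction at $\bbd=(|X|-1,0)$, where the staircase of \cref{ex:HXforPP1xPP2} has a fixed shape and the combinatorics of $R$ is explicit.
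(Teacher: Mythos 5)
You correctly treat this as an open conjecture: the paper offers no proof and only records computational evidence in Example~4.7 (fifty random configurations for each $2\le N\le 100$). Your reduction to the two structural claims (A) and (B) on the order ideal $R$ is a clean and correct reformulation --- given (B), the identity $DH_X(i,j) = -\beta_{1,(i,j)}+\beta_{2,(i,j)}-\beta_{3,(i,j)}+\beta_{4,(i,j)}$ collapses on $R$ to $DH_X(i,j)=-\beta_{1,(i,j)}$, and (A) handles the complement, yielding both directions of the equivalence and the asserted value --- and the existence of a generic Betti table via upper semicontinuity over the irreducible parameter space, restricted to the generic-Hilbert-function locus where the family is flat, is sound. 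Your closing observation that establishing (A) and (B) only in the band of degrees relevant to virtual of a pair at $\bbd=(|X|-1,0)$ would already suffice for \cref{thm:virtual-of-a-pair} is on the mark: this is effectively what \cref{lem:beta2-plus} extracts from the conjecture for the specific degrees appearing in \cref{thm:virtual-of-pair-stronger}.

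Two small corrections. In $\PP^1\times\PP^1$ the minimal free resolution of $S/I_X$ generically has length $3$, not $2$: there are four variables and generic depth $1$, so pdim is $3$, and the corresponding difference matrix is $-\beta_1+\beta_2-\beta_3$, three terms rather than two. Your qualitative point stands --- $\PP^1\times\PP^2$ adds a fourth term $+\beta_4$ to disentangle, which is the genuine new obstruction --- but the comparison should be to three strands. Second, the mapping-cone induction you sketch is where the difficulty really lives and is not just bookkeeping: the cone yields a generally non-minimal resolution of $I_{X\cup\{p\}}$, and without an a priori bound confining the support of the entire Betti table of the general configuration (your ``narrow band'' input), one cannot exclude hidden consecutive nonzero strands that cancel in $DH_X$. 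That bound is exactly the hard content of the classical MRC, which neither the paper nor your sketch supplies; your proposal is an honest roadmap rather than a proof, which is the appropriate stance toward a conjecture.
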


\begin{example}
\label{ex:check-mrc-PP1xPP2}
For each $2\leq N\leq 100$, we verified in Macaulay2 that \cref{conj:mrc} holds for $50$ random sets of $N$ points in $\PP^1\times\PP^2$ with generic Hilbert function \cite{M2}. 
\end{example}

%%%%%%%%%%%%%%%%%%%%%%%%%%%%%%%%%%%%%%%%%%%%%%%
\section{A short virtual of a pair resolution in \texorpdfstring{$\PP^1 \times \PP^2$}{PP1 x PP2}}
%%%%%%%%%%%%%%%%%%%%%%%%%%%%%%%%%%%%%%%%%%%%%%%
\label{sec:VResPair}

In this section, we prove  \cref{thm:virtual-of-a-pair}. 
To begin, 
The virtual of a pair construction is one of the methods given by Berkesch, Erman, and Smith to construct virtual resolutions, and it relies on Maclagan and Smith's notion of multigraded regularity. 
We do not define this invariant in general, but instead provide a simplified characterization for our specific setting. 

\begin{prop}[See \cite{MS}*{Proposition 6.7}]
\label{prop:MS-HFvalue}
Let $X$ be a finite set of points in $\Pnm$. Then the \emph{multigraded regularity} of $S/I_X$ is 
\[
\reg(S/I_X) = \left\{ \bbd\in\ZZ^2\mid H_X(\bbd) = |X| \right\}. 
\]
\end{prop}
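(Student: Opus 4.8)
The plan is to unwind the Maclagan--Smith definition of multigraded regularity, which is phrased in terms of the vanishing of graded pieces of the local cohomology modules $H^i_B(S/I_X)$, and to observe that for a finite set of reduced points all but one of these modules vanish identically. The key input is the standard exact sequence relating local cohomology over the Cox ring of $\Pnm$ to sheaf cohomology on $\Pnm$: writing $\cO_X=\widetilde{S/I_X}$, there is a functorial four-term exact sequence
\[
0 \to H^0_B(S/I_X) \to S/I_X \to \bigoplus_{\mathbf e\in\ZZ^2} H^0\!\left(\Pnm,\cO_X(\mathbf e)\right) \to H^1_B(S/I_X) \to 0
\]
together with isomorphisms $H^{i}_B(S/I_X)_{\mathbf e}\cong H^{i-1}\!\left(\Pnm,\cO_X(\mathbf e)\right)$ for all $i\ge 2$ and $\mathbf e\in\ZZ^2$.

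First I would compute each $H^i_B(S/I_X)$. Since $I_X$ is $B$-saturated (see \cref{not:pointsNotation}), $H^0_B(S/I_X)=0$. Because $X$ is a $0$-dimensional subscheme of $\Pnm$, every twist $\cO_X(\mathbf e)$ is isomorphic to $\cO_X$, so $H^0(\Pnm,\cO_X(\mathbf e))\cong \Bbbk^{|X|}$ and $H^i(\Pnm,\cO_X(\mathbf e))=0$ for $i\ge 1$; hence $H^i_B(S/I_X)=0$ for every $i\ge 2$, and the four-term sequence identifies $H^1_B(S/I_X)_{\mathbf e}$ with the cokernel of the injection $(S/I_X)_{\mathbf e}\hookrightarrow\Bbbk^{|X|}$. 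In particular $H_X(\mathbf e)\le|X|$ for all $\mathbf e$, and $\dim_{\Bbbk}H^1_B(S/I_X)_{\mathbf e}=|X|-H_X(\mathbf e)$, so $H^1_B(S/I_X)_{\mathbf e}=0$ if and only if $H_X(\mathbf e)=|X|$.

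It remains to feed this into the definition of regularity. In the Maclagan--Smith definition, $\bbd$-regularity of $S/I_X$ requires the vanishing of $H^i_B(S/I_X)_{\mathbf e}$ for $i\ge 1$ on a region that, for $i=1$, is exactly the up-set $\{\mathbf e\mid\mathbf e\ge\bbd\}$ and, for $i\ge 2$, is contained in that up-set (and any condition on $H^0_B$ is automatic here by saturation). Since $H^i_B(S/I_X)$ vanishes identically for $i\ne 1$, the only surviving requirement is that $H_X(\mathbf e)=|X|$ for every $\mathbf e\ge\bbd$. Finally, I would note that $\{\mathbf e\mid H_X(\mathbf e)=|X|\}$ is upward closed: by \cref{dodging} the monomial $x_0^{a}y_0^{b}$ does not vanish at any point of $X$, so multiplication by it fits into a commutative square in which the induced map $\Bbbk^{|X|}\to\Bbbk^{|X|}$ is invertible (indeed the identity, since $x_0=y_0=1$ at each point of $X$), whence surjectivity of $(S/I_X)_{\mathbf e}\to\Bbbk^{|X|}$ forces surjectivity of $(S/I_X)_{\mathbf e+(a,b)}\to\Bbbk^{|X|}$; combined with $H_X\le|X|$ this shows $H_X(\bbd)=|X|$ already implies $H_X(\mathbf e)=|X|$ for all $\mathbf e\ge\bbd$. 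Therefore $\reg(S/I_X)=\{\bbd\in\ZZ^2\mid H_X(\bbd)=|X|\}$. The one genuinely delicate point is pinning down the shape of the regions in the Maclagan--Smith definition precisely enough to see that the $i\ge 2$ conditions are vacuous and the $i=1$ condition is the full up-set $\bbd+\NN^2$; everything else is the standard local-cohomology dictionary together with a one-line monotonicity argument, and this is essentially the content of \cite{MS}*{Proposition 6.7}.
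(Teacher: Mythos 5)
The paper does not prove this proposition; it is stated as a direct citation to Maclagan--Smith \cite{MS}*{Proposition 6.7}, so there is no in-paper argument to compare against. Your proof is a correct and reasonably complete reconstruction of why the cited result holds: the reduction to the vanishing locus of $H^1_B(S/I_X)$ via the Cox-ring local cohomology/sheaf cohomology exact sequence, the identification $\dim_{\Bbbk}H^1_B(S/I_X)_{\mathbf e}=|X|-H_X(\mathbf e)$ using $B$-saturation of $I_X$ and triviality of line bundles on a zero-dimensional scheme, and the monotonicity argument via $x_0^a y_0^b$ are all sound, and together they recover the stated equality.

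One small factual slip: in the Maclagan--Smith definition, the vanishing region for $H^i_B$ when $i\ge 2$ is $\bigcup_{|\lambda|=i-1}(\bbd-\lambda+\NN^2)$, which \emph{contains} the up-set $\bbd+\NN^2$ rather than being contained in it (you wrote the containment backwards). This has no bearing on your conclusion, since you have already shown $H^i_B(S/I_X)=0$ identically for $i\neq 1$, making those conditions vacuous regardless of the shape of the regions; but the phrasing should be corrected if this were written up.
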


\begin{example}
    Recall that for points in sufficiently general position in $\PP^1 \times \PP^2$, the Hilbert matrix is given by
    \[
    H_X(i,j)=\min\{|X|,T_{i,1}T_{j,2}\} 
    \quad\text{for all}\quad 
    i,j\geq 0,
    \]
    Therefore if $|X|=N$, then 
    \[
    H_X(N-1,0)=\min\{N,T_{N-1,1}T_{0,2}\}=\min\{N, N\}=N, 
    \]
    so $(N-1,0) \in \reg(S/I_X)$ for all finite sets of $N$ points $X \subseteq \PP^1 \times \PP^2$.
\end{example}

\begin{thm}[See \cite{virtual-original}*{Theorem 1.3}]
\label{thm:trimming}
Let $S$ be the Cox ring of $\Pnm$, let $M$ be a finitely generated bigraded $B$-saturated $S$-module, and let $\bbd\in \ZZ^2$ be such that $\bbd\in\reg(M)$. 
If $G_\bullet$ is the free subcomplex of a minimal free resolution $F_\bullet$ of $M$ consisting of all summands of $F_\bullet$ generated in degree at most $\bbd+(n,m)$, then $G_\bullet$ is a virtual resolution of $M$.
\end{thm}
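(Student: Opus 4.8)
The plan is to prove the equivalent statement that the sheafification $\widetilde{G_\bullet}$ is a locally free resolution of $\widetilde M$ on $\Pnm$. I would carry this out in three moves: confirm that $G_\bullet$ is a subcomplex, reduce the problem to an acyclicity statement about the complementary quotient complex, and then bring in multigraded regularity to settle that acyclicity. The first move is routine: in a minimal $\ZZ^2$-graded free resolution the differential carries a rank-one summand $S(-\mathbf{e})$ of $F_i$ into a sum of summands $S(-\mathbf{e}')$ of $F_{i-1}$ with $\mathbf{e}'\preceq\mathbf{e}$, since the matrix entries lie in $S$ and hence have componentwise nonnegative degree; thus the collection of summands generated in degrees $\preceq\bbd+(n,m)$ is closed under $d$, and $G_\bullet$ is an honest subcomplex of $F_\bullet$.

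Next I would set $Q_\bullet \defeq F_\bullet/G_\bullet$, a bounded complex of free modules whose summands are generated only in degrees $\mathbf{e}\not\preceq\bbd+(n,m)$. The short exact sequence of complexes $0\to G_\bullet\to F_\bullet\to Q_\bullet\to 0$, together with $H_i(F_\bullet)=0$ for $i>0$ and $H_0(F_\bullet)=M$, yields via the long exact homology sequence isomorphisms $H_i(G_\bullet)\cong H_{i+1}(Q_\bullet)$ for $i\ge 1$ and an exact sequence $0\to H_1(Q_\bullet)\to H_0(G_\bullet)\to M\to H_0(Q_\bullet)\to 0$. So it suffices to show that every $H_i(Q_\bullet)$ is annihilated by a power of $B$: then both conditions defining a virtual resolution hold for $G_\bullet$. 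The case $i=0$ is easy: $H_0(Q_\bullet)$ is $M$ modulo the submodule generated by the minimal generators of $M$ sitting in degrees $\preceq\bbd+(n,m)$, which in particular contains $M_{\ge\bbd}$ because $\bbd\in\reg M$ forces the truncation $M_{\ge\bbd}$ to be generated in degree $\bbd$; and $M/M_{\ge\bbd}$ is $B$-torsion, since $B$ is generated in bidegree $(1,1)$ and so a large power of it carries every graded component of $M/M_{\ge\bbd}$ into degrees $\ge\bbd$.

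The substantive step — and the one I expect to be the main obstacle — is showing that $H_i(Q_\bullet)$ is $B$-torsion for $i\ge 1$, i.e. that the ``high-degree tail'' of the resolution sheafifies to an exact complex; this is where the hypothesis $\bbd\in\reg M$ is genuinely needed. The approach I would take is to compare $F_\bullet$ with the minimal free resolution $E_\bullet$ of the truncation $M'\defeq M_{\ge\bbd}$. One has $\widetilde{M'}\cong\widetilde M$ (as $M/M'$ is $B$-torsion, by the same argument as above), and the Maclagan--Smith theory of multigraded regularity guarantees that, since $\bbd\in\reg M$, every summand of every $E_i$ is generated in a degree $\preceq\bbd+(n,m)$. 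A lift of $M'\hookrightarrow M$ to a chain map $E_\bullet\to F_\bullet$ then factors through $G_\bullet$ by the degree bound, and after sheafifying it becomes a quasi-isomorphism, since $\widetilde{E_\bullet}$ and $\widetilde{F_\bullet}$ both resolve $\widetilde M$.

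The remaining delicate point is to turn this comparison into the vanishing of $\widetilde{H_i(Q_\bullet)}$ for $i\ge 1$; the comparison map alone is not enough, since formally ``$\widetilde{G_\bullet}$ resolves $\widetilde M$'' is equivalent to ``$\widetilde{Q_\bullet}$ is exact,'' so more input is required. What does the work is the characterization of $\bbd\in\reg M$ by the vanishing of the local cohomology modules $H^i_B(M)$ in a prescribed cone of degrees, combined with the \v{C}ech/Koszul-on-$B$ description of those modules; this is exactly what controls which twists $S(-\mathbf{e})$ with $\mathbf{e}\not\preceq\bbd+(n,m)$ can appear in $F_\bullet$ and forces the subquotient complex $Q_\bullet$ to have only $B$-torsion homology. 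Once that acyclicity is established, the reduction above gives $\widetilde{H_0(G_\bullet)}\cong\widetilde M$ and $\widetilde{H_i(G_\bullet)}=0$ for $i>0$, so $G_\bullet$ is a virtual resolution of $M$, as claimed.
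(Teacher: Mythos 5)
The paper does not actually prove this statement: it is Theorem~1.3 of \cite{virtual-original}, imported as a black box (hence the bracketed ``See''), so there is no in-paper proof to compare against and your proposal has to be measured against the original argument in \cite{virtual-original}.

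Your opening two moves are correct. Minimality of $F_\bullet$ ensures the differential does not raise degrees in the componentwise order, so the summands generated in degrees $\leq\bbd+(n,m)$ do form a subcomplex $G_\bullet$; and the long exact homology sequence of $0\to G_\bullet\to F_\bullet\to Q_\bullet\to 0$, together with acyclicity of $F_\bullet$, correctly reduces the claim to showing that every $H_i(Q_\bullet)$ is annihilated by a power of $B$. Your treatment of $i=0$, via $M/M_{\geq\bbd}$ being $B$-torsion when $\bbd\in\reg M$, is also sound.

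The gap is that the substantive step --- $H_i(Q_\bullet)$ is $B$-torsion for $i\geq 1$, equivalently $\widetilde{Q_\bullet}$ is exact --- is never actually carried out. The comparison-map strategy you offer is logically circular, and you half-notice this: producing a chain map $E_\bullet\to G_\bullet$ whose composite with the inclusion $G_\bullet\hookrightarrow F_\bullet$ is a sheaf-level quasi-isomorphism tells you nothing by itself, since a composite being a quasi-isomorphism does not force either factor to be one; and by the very long exact sequence you already set up, ``$E_\bullet\to G_\bullet$ is a sheaf-level quasi-isomorphism'' is \emph{equivalent} to ``$\widetilde{Q_\bullet}$ is exact,'' which is what you are trying to prove. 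The closing paragraph then asserts that the local-cohomology characterization of $\reg M$ ``controls which twists can appear'' and ``forces $Q_\bullet$ to have only $B$-torsion homology,'' but that is the statement of the theorem, not an argument for it. A genuine proof has to produce the acyclicity, for instance by the route used in \cite{virtual-original}: construct, from a Koszul/Beilinson resolution of the diagonal for $\Pnm$, an explicit free complex over $S$ sheafifying to a resolution of $\widetilde M$ whose summands sit in the box $[\bbd,\bbd+(n,m)]$, and then run an actual mapping-cone and minimality argument against $F_\bullet$. Relatedly, your intermediate assertion that the minimal free resolution of $M_{\geq\bbd}$ has all summands in degrees $\leq\bbd+(n,m)$ is true but is not an off-the-shelf Maclagan--Smith fact; it is a consequence of exactly that box-shaped complex in \cite{virtual-original}, so citing ``Maclagan--Smith theory'' for it, and then for the acyclicity as well, conceals where the work really is.
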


As in \cite{virtual-original}, we call the virtual resolution in \cref{thm:trimming} the \emph{virtual resolution of the pair} $(M,\bbd)$. 
With this in hand, we are now prepared to state the main result of this section, a more explicit version of \cref{thm:virtual-of-a-pair} when $|X| \ge 12$. The case for smaller sets of points is handled in \cref{rmk:special-cases}. 

\begin{thm}[See \cref{thm:virtual-of-a-pair}]
\label{thm:virtual-of-pair-stronger}
Let $X\subseteq \PP^1\times\PP^2$ be a finite set of sufficiently general points with $N=|X|\geq 12$. 
If \Cref{conj:mrc} holds, 
then the virtual of a pair construction for $\bbd=(N-1,0)$ yields a virtual resolution of $S/I_X$ of the form 
\small
\[
S 
\gets 
\begin{gathered}S(-q,-2)^{6-r} \\[-5pt] \oplus \\[-5pt] S(-q-1,-2)^r \\[-5pt] \oplus \\[-5pt] S(-q',-1)^{3-r'} \\[-5pt] \oplus \\[-5pt] S(-q'-1,-1)^{r'} \\[-5pt] \oplus \\[-5pt] S(-N,0)\end{gathered} 
\gets
\begin{gathered}S(-q',-2)^{9-3r'} \\[-5pt] \oplus \\[-5pt] S(-q'-1,-2)^{3r'} \\[-5pt] \oplus \\[-5pt] S(-N,-1)^3\end{gathered} 
\gets
S(-N,-2)^3 
\gets 
0,
\]
\normalsize
where $N=6q+r=3q'+r'$ with $0\leq r\leq 5$ and $0\leq r'\leq 2$. 
When $r$ (respectively, $r'$) is zero, the term $S(-q-1,-2)^r$ (respectively, $S(-q'-1,-1)^{r'}$ and $S(-q'-1,-2)^{3r'}$) do not appear in the virtual resolution.
\end{thm}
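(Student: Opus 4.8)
The plan is to start from the minimal free resolution $F_\bullet$ of $S/I_X$ and apply \cref{thm:trimming} with $\bbd=(N-1,0)$, so that the resulting virtual resolution $G_\bullet$ is exactly the subcomplex of $F_\bullet$ consisting of summands generated in degrees $(i,j)\le \bbd+(n,m)=(N+1,2)$. Since every generator of $F_\bullet$ for a set of points lies in degrees with $0\le j$ and (as we will see) $i\le N$, the only constraint that actually cuts anything out is $j\le 2$. Thus $G_\bullet$ retains precisely the summands $S(-i,-j)^{\beta_{k,(i,j)}}$ with $j\in\{0,1,2\}$. The bulk of the proof is therefore a computation of all bigraded Betti numbers $\beta_{k,(i,j)}$ of $S/I_X$ with $j\le 2$, under \cref{conj:mrc}, for $X$ in sufficiently general position with $N\ge 12$; the claimed form of the complex is then just a bookkeeping assembly of these numbers.

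The key computational steps are as follows. First I would compute the difference matrix $DH_X=(\Delta^C)^2(\Delta^R)^3 H_X$ using the explicit Hilbert matrix of \cref{ex:HXforPP1xPP2}; this is a finite, local computation since $H_X$ stabilizes to the constant $N$. Because $DH_X$ is the $(0,1,\dots)$--supported second/third difference, only finitely many entries in the strip $1\le j\le 2$ (and the column $j=0$) are nonzero, and their locations are governed by the ``corners'' of the region where $H_X$ has not yet reached $N$, i.e.\ by $q,q',r,r'$ and $d$. Second, using \cref{conj:mrc}(2), the negative entries of $DH_X$ that are ``first'' in the partial order pin down $\beta_{1,(i,j)}$ exactly: I expect these to be $\beta_{1,(q,2)}=6-r$, $\beta_{1,(q+1,2)}=r$, $\beta_{1,(q',1)}=3-r'$, $\beta_{1,(q'+1,1)}=r'$, and $\beta_{1,(N,0)}=1$ (the last coming from the unique minimal generator in bidegree $(N,0)$, visible because $H_X(N-1,0)=N$ but $H_X(N,0)$ contributes no new linear-in-$x$ relation — equivalently $DH_X(N,0)=-1$). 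Third, once the $\beta_1$'s in the strip $j\le 2$ are known and one knows (from \cref{thm:intersectionPPnxPPm}, or rather from the codimension/Auslander--Buchsbaum bound for points in $\PP^1\times\PP^2$ that the projective dimension is at most $4$, with the length-$3$ virtual part concentrated in $j\le 2$) that the alternating sum in \eqref{eq:CR-P1P2} has no cancellation within each fixed bidegree of interest, I can read off $\beta_{2,(i,j)}$ and $\beta_{3,(i,j)}$ from the remaining entries of $DH_X$. This should yield $\beta_{2,(q',2)}=9-3r'$, $\beta_{2,(q'+1,2)}=3r'$, $\beta_{2,(N,1)}=3$, and $\beta_{3,(N,2)}=3$, with all other Betti numbers in the strip $j\le 2$ vanishing, and $\beta_{4,(i,j)}=0$ for $j\le 2$.

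The main obstacle is the passage from the alternating-sum information in $DH_X$ (which is all that \cref{prop:CRgeneral} and \cref{conj:mrc} directly supply) to the individual Betti numbers $\beta_{2},\beta_3,\beta_4$ in the strip $j\le 2$: a priori $DH_X(i,j)=-\beta_{1,(i,j)}+\beta_{2,(i,j)}-\beta_{3,(i,j)}+\beta_{4,(i,j)}$ could hide cancellation. The way around this is to note that for a bidegree $(i,j)$ with $j\le 2$, consecutive homological Betti numbers cannot both be nonzero for degree reasons internal to $\PP^1\times\PP^2$: the $x$-degree $i$ of a $k$th syzygy is bounded, and in fact in each column $j=0,1,2$ the relevant generators occur in at most one or two $x$-degrees (the values $q,q+1$ for $j=2$, etc.), so in any fixed bidegree at most one $\beta_{k,(i,j)}$ is nonzero. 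Making this last point rigorous — ruling out, say, $\beta_{1,(q',2)}$ and $\beta_{2,(q',2)}$ being simultaneously nonzero — is where \cref{conj:mrc}(2) is used in full force together with the sign pattern of $DH_X$ (the hypothesis ``$DH_X(i',j')\le 0$ for all smaller $(i',j')$'' exactly prevents a $\beta_1$ from appearing where we want only a $\beta_2$). Finally, for completeness I would verify the edge behavior when $r=0$ or $r'=0$, where the indicated summands genuinely disappear because the corresponding corner of $H_X$ degenerates, and cross-check the whole resolution against the Hilbert series identity $\sum_k(-1)^k\sum_{i,j}\beta_{k,(i,j)}t_1^it_2^j = (1-t_1)^2(1-t_2)^3\,\mathrm{Hilb}_{S/I_X}$ as a sanity check.
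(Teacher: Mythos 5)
Your high-level strategy matches the paper's: trim at $\bbd=(N-1,0)$, compute the difference matrix $DH_X$, read off $\beta_1$'s from \cref{conj:mrc}, and then deduce the higher Betti numbers in the strip $j\le 2$ so the trimmed subcomplex is the claimed complex. (Minor correction: $\bbd+(n,m)=(N-1,0)+(1,2)=(N,2)$, not $(N+1,2)$, though this does not change the fact that the only binding cut-off is $j\le 2$.)

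The genuine gap is in the step you single out as ``the main obstacle.'' You want to conclude, e.g., $\beta_{2,(q',2)}=DH_X(q',2)$ and $\beta_{3,(N,2)}=-DH_X(N,2)$ from the alternating sum \eqref{eq:CR-P1P2}, but to do so you must show that $\beta_3$ and $\beta_4$ vanish at $(q',2)$, and that $\beta_4$ vanishes at $(N,2)$. Your justification --- ``consecutive homological Betti numbers cannot both be nonzero for degree reasons,'' or ``generators occur in at most one or two $x$-degrees per column'' --- is not a proof, and in fact minimality of the bigraded resolution alone does \emph{not} forbid $\beta_{k,(i,j)}$ and $\beta_{k+1,(i,j)}$ from both being nonzero (that is exactly what the MRC asserts and what you are trying to deduce). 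And \cref{conj:mrc}(2) only controls $\beta_1$, not $\beta_3$ or $\beta_4$, so the sign pattern of $DH_X$ together with the conjecture does not by itself rule out cancellation between $\beta_2$ and $\beta_4$, say. The paper supplies the missing mechanism in \cref{lem:beta2-plus}: filter the minimal free resolution $F_\bullet$ by the $x$-degree of the generators, observe that each subcomplex $C^{\le i}$ is a virtual resolution of a pair (since $(i-1,K-2)\in\reg(S/I_X)$ for $K\gg 0$), conclude that the graded quotient $C^i$ has $B$-power-torsion homology, and then --- because $C^i$ lives over $\Bbbk[y_0,y_1,y_2]$ with homology killed by a power of $\langle y_0,y_1,y_2\rangle$ --- invoke the Acyclicity Lemma to see that $C^i$ is a minimal free resolution over $\Bbbk[\bby]$. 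In that one-variable-set setting, minimal generating degrees genuinely do increase strictly in homological degree, which is what kills $\beta_{3,(i,j)}$, $\beta_{4,(i,j)}$ at the positions you need, and ultimately lets you read $\beta_2$ and $\beta_3$ directly off $DH_X$. Without this filtration-plus-Acyclicity argument (or an equivalent substitute), your proposal cannot get past the alternating sum.

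One further small inaccuracy: in column $j=2$ there are potentially five distinct $x$-degrees carrying nonzero Betti numbers ($q$, $q+1$, $q'$, $q'+1$, and $N$), so the claim ``at most one or two $x$-degrees per column'' is not accurate as stated, though the conclusion you want --- at most one $k$ with $\beta_{k,(i,j)}\ne 0$ for each fixed $(i,j)$ --- is correct once the filtration argument is in place.
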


\begin{remark}
\label{rmk:special-cases}
In the case that $2\leq |X|\leq 11$, if we assume \cref{conj:mrc}, then it is still true that $S/I_X$ has a virtual resolution of length $3$ obtained by taking $\bbd=(|X|-1,0)$ in \cref{thm:trimming}, and the proofs are analogous to \cref{thm:virtual-of-pair-stronger}, but the resulting virtual resolution has a slightly different form. 
These cases are listed in \cref{sec:cases-2-to-11}, and with \cref{thm:virtual-of-pair-stronger}, complete the proof of \cref{thm:virtual-of-a-pair}.
\end{remark}

We now state two lemmas for the proof of \cref{thm:virtual-of-pair-stronger}. 

\begin{lem}
\label{lem:submatrix}
Let $X\subseteq \PP^1\times \PP^2$ be a collection of points with generic Hilbert matrix as in \cref{def:suffGeneral} with $N = |X|\geq 12$. 
If 
\[
\small 
N = 6q+r=3q'+r' 
\quad\text{with}\quad 
0\leq r\leq 5 
\quad\text{and}\quad 
0\leq r'\leq 2,
\]
\normalsize
then using \cref{not:difference}, 
\vspace*{-4mm}
\small
\begin{equation*}
\label{eq:DH-generic}
DH_X=
\begin{blockarray}{ccccc}
\begin{block}{c(cccc)} 
0 & 1 & 0 & 0 & \cdots\\
1 & 0 & 0 & 0 & \cdots \\
\vdots & \vdots & \vdots & \vdots & \\
q-1 & 0 & 0 & 0 & \cdots\\
q &0 & 0 & r-6& \cdots\\
q+1& 0 & 0 & -r& \cdots\\
q+2 & 0 & 0 & 0& \cdots\\
\vdots & \vdots & \vdots & \vdots & \\
q'-1 & 0 & 0 & 0& \cdots\\
q' & 0 & r'-3 & 9-3r'& \cdots\\
q'+1 & 0 & -r' & 3r'& \cdots\\
q'+2 & 0 & 0 & 0& \cdots\\
\vdots & \vdots & \vdots & \vdots & \\
N-1 & 0 & 0 & 0 & \cdots\\
N & -1 & 3 & -3 & \cdots \\
N+1 & 0& 0 & 0 & \cdots \\
&  \vdots & \vdots & \vdots & \ddots\\
\end{block}
\end{blockarray}\ .
\end{equation*}
\normalsize
\end{lem}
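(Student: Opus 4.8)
The plan is to compute $DH_X = (\Delta^C)^2(\Delta^R)^3 H_X$ directly from the explicit Hilbert matrix given in \cref{ex:HXforPP1xPP2}, exploiting the fact that $H_X(i,j) = \min\{N, T_{i,1}T_{j,2}\} = \min\{N, (i+1)\binom{j+2}{2}\}$ for $i,j \geq 0$ and is zero for negative indices. First I would apply $(\Delta^R)^3$ to $H_X$. Since $T_{j,2} = \binom{j+2}{2}$ is a degree-$2$ polynomial in $j$, the operator $(\Delta^R)^3$ annihilates $T_{i,1}T_{j,2}$ wherever the formula $H_X(i,j) = (i+1)T_{j,2}$ holds on a full window of five consecutive values $j-3,\dots,j+1$ (or where it is constantly $N$, or constantly $0$). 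So $(\Delta^R)^3 H_X$ is supported only near the ``staircase boundary'' where the min switches from the polynomial branch to the constant branch $N$ — that is, near the columns $c_j$ defined in \cref{ex:HXforPP1xPP2}. A careful bookkeeping of these boundary corrections, row by row, produces a matrix supported in a bounded set of columns; then applying $(\Delta^C)^2$ (a second-difference in the $i$-direction, which annihilates anything linear in $i$) collapses the support further, since on the polynomial branch the entries are linear in $i$.

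Concretely, I would organize the computation by the row index $i$ (i.e., the $\PP^1$-degree). For $i$ in the range where $T_{i,1} = i+1$ is large enough that the whole row saturates early, the relevant data is: where does row $i$ first hit $N$? For $j \geq d$ every row equals $N$, contributing nothing after $\Delta^R$; the action happens at the transition. The key numerical inputs are the three distinguished rows $i = q-1$ (equivalently the block boundary at $c_j$), $i = q' - 1$, and $i = N-1$, together with their successors, matching the block structure displayed in \cref{ex:HXforPP1xPP2}: the division $N = 6q+r$ controls the transition in the ``$T_{j,2} = 6$'' column (i.e. $j = 2$), the division $N = 3q' + r'$ controls the transition in the ``$T_{j,2}=3$'' column ($j=1$), and the trivial transition at $j = 0$ (where $T_{0,2}=1$, so $H_X(i,0) = i+1$ until $i = N-1$) produces the bottom corner. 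Computing $(\Delta^R)^3$ in each of these three column-neighborhoods and then $(\Delta^C)^2$ in each of the three row-neighborhoods yields exactly the nonzero entries $1$ (at $(0,0)$, from the initial $H_X(0,0)=1$ corner), the $2\times 1$ block $\begin{psmallmatrix} r-6 \\ -r \end{psmallmatrix}$ at rows $q, q+1$ in column $j=2$, the $2\times 1$ block $\begin{psmallmatrix} r'-3 & 9-3r' \\ -r' & 3r' \end{psmallmatrix}$ at rows $q', q'+1$ in columns $j=1,2$, and the row $(-1, 3, -3)$ at row $N$ in columns $j = 0,1,2$. Everything else cancels.

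The main obstacle I anticipate is the bookkeeping at the ``corners'' — the finitely many $(i,j)$ where two different branches of the min interact in a window smaller than the stencil size of $(\Delta^C)^2(\Delta^R)^3$, so that neither the ``pure polynomial,'' ``pure constant $N$,'' nor ``pure zero'' simplification applies. These are precisely the entries listed as nonzero in the statement, and getting their exact values (including the dependence on the residues $r, r'$ and correctly handling the degenerate cases $r = 0$ or $r' = 0$, where a transition happens in one step rather than two) requires evaluating the difference operators by hand on small explicit tables rather than by the polynomial-annihilation shortcut. One should also double-check the hypothesis $N \geq 12$ is exactly what guarantees the three transition regions (near $j=0$, $j=1$, $j=2$) are disjoint in the $i$-direction — i.e., that $q - 1$, $q'-1$, and $N-1$ are spread far enough apart that their $(\Delta^C)^2$-neighborhoods do not overlap — so that the contributions can be computed independently and simply superimposed; this is where the threshold $12$ comes from, and it must be verified rather than assumed.

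Finally, as a consistency check one can verify that the alternating column-and-row sums of the claimed $DH_X$ reproduce $H_X$ via the inverse relationship implicit in \cref{prop:CRgeneral} (or directly that $\sum_{(p,q)\le(i,j)} T_{i-p,1}T_{j-q,2}\, (DH_X)_{p,q} = H_X(i,j)$), which pins down the signs and the $r, r'$ coefficients; this also serves as the bridge to the next step, where \eqref{eq:CR-P1P2} and \cref{conj:mrc} will read off the first Betti numbers from the negative entries of $DH_X$.
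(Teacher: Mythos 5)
Your proposal is correct and matches the paper's proof: both compute $DH_X$ directly from the explicit Hilbert matrix in \cref{ex:HXforPP1xPP2}, the paper by first displaying the intermediate matrix $\Delta^C\Delta^R H_X$ and then finishing with $\Delta^C(\Delta^R)^2$, you by applying $(\Delta^R)^3$ then $(\Delta^C)^2$ (equivalent, since the operators commute). Your polynomial-annihilation framing --- $(\Delta^R)^3$ kills the degree-$2$-in-$j$ branch and $(\Delta^C)^2$ the degree-$1$-in-$i$ branch, so only the staircase corners survive --- is a cleaner explanation of the finite support than the paper's bare ``direct computation''; two small notes: the stencil of $(\Delta^R)^3$ requires the min to take a single branch on four (not five) consecutive $j$-values, and the role of $N\geq 12$ in keeping $q+1<q'$ and $q'+1<N-1$ so the $(\Delta^C)^2$-neighborhoods of the three transitions are disjoint does need the quick verification you flag.
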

\begin{proof}
Recall from \cref{def:column-row-difference-operators} that $\Delta^C$ and $\Delta^R$ commute, and set $\Delta\defeq \Delta^C\Delta^R=\Delta^R\Delta^C$.
In light of \cref{ex:HXforPP1xPP2}, straightforward computation yields
\[
\small
H_X =
\begin{blockarray}{ccccc}
\begin{block}{c(cccc)} 
0  & 1 & 3 & 6 & \cdots\\ 
1  &2 & 6 & 12 & \cdots\\ 
\vdots  &\vdots & \vdots & \vdots & \\ 
q-1  &q & 3q & 6q & \cdots\\ 
q    &q+1 & 3q+3 & N & \cdots\\ 
q+1  &q+2 & 3q+6 & N & \cdots\\ 
\vdots  &\vdots & \vdots & \vdots &\\ 
q'-1  &q' & 3q' & N & \cdots\\ 
q'    &q'+1 & N & N & \cdots\\ 
q'+1  &q'+2 & N & N & \cdots\\ 
\vdots  &\vdots & \vdots & \vdots &\\ 
N-1  &N & N & N & \cdots\\ 
N    &N & N & N & \cdots \\
&\vdots & \vdots & \vdots & \ddots \\
\end{block}
\end{blockarray}
\quad\ \text{\normalsize and}\quad 
\Delta H_X = 
\begin{blockarray}{ccccc}
\begin{block}{c(cccc)} 
0  &1 & 2 & 3 & \cdots\\ 
1  &1 & 2 & 3 & \cdots\\ 
\vdots  &\vdots & \vdots & \vdots &\\ 
q-1  &1 & 2 & 3 & \cdots\\ 
q    &1 & 2 & r-3 & \cdots\\ 
q+1  &1 & 2 & -3 & \cdots\\ 
\vdots  &\vdots & \vdots & \vdots &\\ 
q'-1  &1 & 2 & -3 & \cdots\\ 
q'    &1 & r'-1 & -r' & \cdots\\ 
q'+1  &1 & -1 & 0 & \cdots\\ 
\vdots  &\vdots & \vdots & \vdots &\\ 
N-1  &1 & -1 & 0 & \cdots\\ 
N    &0 & 0 & 0 & \cdots\\
&\vdots & \vdots & \vdots & \ddots \\
\end{block}
\end{blockarray}\ .
\normalsize
\vspace*{-2mm}
\]
The result follows from direct computation. 
\end{proof}

\begin{lem}
\label{lem:beta2-plus}
Let $X$ be a set of $N\geq 12$ points in sufficiently general position in $\PP^1 \times \PP^2$, and let $F_\bullet$ be the minimal free resolution of $S/I_X$ with Betti numbers $\beta_{k,(i,j)}$.
If $DH_X(i,j)$ is the first positive entry in the $i$th row of $DH_X$ (excluding the $0$th and $1$st row), 
then $\beta_{2,(i,j)} = DH_X(i,j)$, i.e., that positive entry corresponds exactly to the number of first syzygies of $S/I_X$ of degree $(i,j)$. 
Furthermore, $\beta_{2,(i,j')} = 0$ for all $j' < j$, 
i.e., there are no first syzygies of smaller degree coming from that row. 
\end{lem}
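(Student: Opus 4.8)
\emph{Overview and the first Betti numbers.} The plan is to pin down $\beta_{1}$ completely and then propagate that information up the resolution by an induction over the bidegree lattice, combining at each stage the identity \eqref{eq:CR-P1P2} (so that $DH_X=-\beta_{1}+\beta_{2}-\beta_{3}+\beta_{4}$ above $(0,0)$), nonnegativity of Betti numbers, and minimality. First, \cref{conj:mrc} gives $\beta_{1,(i,j)}=-DH_X(i,j)$ when $DH_X(i,j)<0$ and every entry of $DH_X$ weakly below $(i,j)$ other than $(0,0)$ is $\le 0$, and $\beta_{1,(i,j)}=0$ otherwise. Reading this against \cref{lem:submatrix}, together with the same computation in the columns $j\ge 3$ (which I would carry out first), shows that $\supp\beta_1$ consists exactly of the negative entries on the lower edge of the staircase of $DH_X$; in particular, writing $(i,j)$ for the first positive entry in row $i$, the unique element of $\supp\beta_1$ in row $i$ (if any) is $(i,j-1)$, and, since the column of the first positive entry is non-increasing in the row index, the only first positive entry that can lie strictly below $(i,j)$ is one in an earlier row in the \emph{same} column $j$ -- which happens only at the two-step corners.

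\emph{Descent.} In a minimal free resolution every basis element of $F_k$ (for $k\ge 1$) has nonzero image in $\frm F_{k-1}$, so $\beta_{k,\mathbf{e}}>0$ forces $\beta_{k-1,\mathbf{d}}>0$ for some $\mathbf{d}<\mathbf{e}$; iterating, $\supp\beta_k$ lies among the bidegrees strictly above $\supp\beta_{k-1}$.

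\emph{Induction.} Ordering the bidegrees $(a,b)$ with $a\ge 2$ by a linear extension of the partial order, I would prove: if $(a,b)$ is weakly below the first positive entry of its row then $\beta_{3,(a,b)}=\beta_{4,(a,b)}=0$ and $\beta_{2,(a,b)}=\max\{0,DH_X(a,b)\}$. If $(a,b)$ is strictly to the left of the first positive entry of its row, the overview shows no first positive entry is $<(a,b)$, so $\supp\beta_2$ meets nothing below $(a,b)$ and hence $\beta_{3,(a,b)}=\beta_{4,(a,b)}=0$ by descent; then \eqref{eq:CR-P1P2} reads $DH_X(a,b)=-\beta_{1,(a,b)}+\beta_{2,(a,b)}$, and since $DH_X(a,b)<0$ forces $(a,b)\in\supp\beta_1$ (one checks this from the staircase), nonnegativity yields $\beta_{2,(a,b)}=0=\max\{0,DH_X(a,b)\}$. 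At the first positive entry $(a,b)=(i,j)$ itself, $DH_X(i,j)>0$ gives $\beta_{1,(i,j)}=0$; the overview and descent show that a nonzero $\beta_{4,(i,j)}$ has nothing below it in $\supp\beta_3$ and so vanishes, while a nonzero $\beta_{3,(i,j)}$ could only descend from the first syzygies $\beta_{2,(i',j)}$ at an earlier row $i'<i$ of a two-step corner; after ruling that out one gets $\beta_{3,(i,j)}=0$ and hence $\beta_{2,(i,j)}=DH_X(i,j)$ from \eqref{eq:CR-P1P2}. The statement of the lemma is the case $(a,b)=(i,j)$ together with the vanishing at $(a,b)=(i,j')$ for $j'<j$.

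\emph{Main obstacle.} The one step that does not reduce to pure bookkeeping is showing $\beta_{3,(i,j)}=0$ at a two-step corner, where the first syzygies $\beta_{2,(i',j)}$ in the earlier row really are nonzero: one must show that no ``linear'' second syzygy among those syzygies has degree $(i,j)$. Establishing this --- for which the genericity of the points and the precise local shape of $DH_X$ near each corner (the extension of \cref{lem:submatrix} to all columns) are the available tools --- is the heart of the proof, and it is also where the restriction to rows $i\ge 2$ enters, since the corners in rows $0$ and $1$ sit too close to the module generator in degree $(0,0)$ for the bookkeeping above to run uniformly.
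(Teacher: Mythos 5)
Your reduction of the problem to the single claim $\beta_{3,(i,j)}=0$ at the first positive entry of row $i$ is correct, and you've identified the right sticking point: a priori nothing in the bookkeeping rules out a ``linear'' second syzygy in degree $(i,j)$ descending from $\beta_{2}$ in an earlier row of the same column. But you leave that step unproved, and it does not follow from the alternating-sum identity, nonnegativity, and coarse descent alone; the partial order on $\mathbb{Z}^2$ is too weak to isolate row $i$ from row $i'<i$ in a fixed column. So as written the proposal has a genuine gap, one you flag yourself.

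The paper fills exactly this gap with a different and cleaner mechanism. Filter $F_\bullet$ by the first coordinate: let $C^{\le i}$ be the subcomplex of $F_\bullet$ generated by summands whose generating degree $(a_1,a_2)$ has $a_1\le i$, and let $C^i$ be the quotient $C^{\le i}/C^{\le i-1}$. For $K\gg N$ one has $(i-1,K-2)\in\reg(S/I_X)$, so by \cref{thm:trimming} both $C^{\le i-1}$ and $C^{\le i}$ are virtual resolutions of $S/I_X$; the long exact sequence then shows $C^i$ has $B$-torsion homology. Since every generator of $C^i$ sits in first coordinate exactly $i$, the complex $C^i$ is really a complex over $\Bbbk[y_0,y_1,y_2]$, and its homology is killed by a power of $\langle y_0,y_1,y_2\rangle$. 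The Acyclicity Lemma (\cite{eisenbud}*{Lemma~20.11}) then forces $H_k(C^i)=0$ for $k>1$, so $C^i$ is a minimal free resolution over $\Bbbk[y_0,y_1,y_2]$. Minimality of $C^i$ (not just of $F_\bullet$) now forces the $y$-degrees to strictly increase along $C^i$, which is exactly the statement that syzygies in row $i$ cannot be linear over generators in row $i$, and rows $i'<i$ are excluded by construction. That is precisely the ``main obstacle'' you could not close by bookkeeping, and it is why the proof restricts to $i\ge 2$: for $i=0,1$ the module $S/I_X$ itself contributes to $C^{\le i}$ and the argument above would need modification. You should replace your inductive lattice sweep by this filtration argument; once each $C^i$ is known to be a minimal free resolution over $\Bbbk[\mathbf{y}]$, the identification $\beta_{2,(i,j)}=DH_X(i,j)$ and the vanishing of $\beta_{2,(i,j')}$ for $j'<j$ follow immediately from \eqref{eq:CR-P1P2}, \cref{conj:mrc}, and \cref{lem:submatrix}.
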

\begin{proof}
This argument follows the approach of \cite{booms-peot}*{Lemma~3.2}. 
Let $F_\bullet$ be the minimal free resolution of $S/I_X$. For $i\geq 0$, let $C^{\leq i}$ denote the subcomplex of $F_\bullet$ with summands generated in degree $\ba = (a_1,a_2)$ such that $a_1\leq i$. 
These complexes provide a filtration of $F_\bullet$, so for $i\geq 1$, let $C^i$ denote the cokernel of the natural inclusion $C^{\leq i-1}\hookrightarrow C^{\leq i}$. 

When $K\gg N$, we claim that $C^{\leq i}$ is a virtual resolution of a pair for $\left(S/I_X,(i-1,K-2)\right)$. 
To see this, note that $(0,N-2)\in \reg(S/I_X)$ by \cref{prop:MS-HFvalue}. 
Thus $(i-1,K-2)\in\reg(S/I_X)$ for all $i\geq 1, K\gg N$. 
By choosing $K$ to be the largest degree in the second coordinate of a generator of any summand in $C^{\leq i}$, 
$C^{\leq i}$ is exactly the virtual resolution of the pair $(S/I_X,(i-1,K-2))$. 
Now $0\to C^{\leq i-1}\to C^{\leq i}\to C^i\to 0$ is a short exact sequence of complexes, and $C^{\leq i-1}$ and $C^{\leq i}$ are both virtual resolutions of $S/I_X$ by \cref{thm:trimming}, so $C^i$ must have irrelevant homology. 
Further, since the first coordinates of the degrees of all generators in $C^i$ are fixed at $i$, it is actually a complex over $\Bbbk[y_0,y_1,y_2]$ with homology modules annihilated by some power of $\<y_0,y_1,y_2\>$. The Acyclicity Lemma \cite{eisenbud}*{Lemma~20.11} thus ensures that $H_k(C^i)=0$ for all $k>1$. 

Now if $DH_X(i,j)$ is the first positive entry with $i\geq 2$ in the $i$th row of $DH_X$, then 
\[
C^i\colon\quad 0\gets C^i_1\gets C^i_2\gets C^i_3\gets C^i_4\gets 0
\]
is a minimal free resolution over $\Bbbk[y_0,y_1,y_2]$, so there are no units in its maps and the minimal generating degrees of summands must increase with homological degree. 

By \cref{conj:mrc}, it follows that the Betti numbers $\beta_{1,(i,j')}$ come precisely from the first negative entries of the $i$th row of $DH_X$, where $j'<j$. 
By inspection from \cref{lem:submatrix}, 
$0>DH_X(i,j-1) = \beta_{1,(i,j-1)}$ and thus $\beta_{1,(i,j)}=0$. 
Further, since the minimal degree of generators must increase in $C^i$, 
$\beta_{3,(i,j)}=\beta_{4,(i,j)}=0$ and 
$DH_X(i,j) = \beta_{2,(i,j)}$, as desired. 
\end{proof}

We are now prepared to prove the main theorem of this section. 

\begin{proof}[Proof of \cref{thm:virtual-of-pair-stronger}]
Since $X$ has the generic Hilbert matrix in \cref{def:suffGeneral}, $H_X(N-1,0) = |X|=N$, so $(N-1,0)\in \reg(S/I_X)$ by \cite{MS}*{Proposition 6.7}. 
Thus \cref{thm:trimming} produces a virtual resolution of $S/I_X$ that is the subcomplex of the minimal free resolution of $S/I_X$ consisting of all summands generated in degrees at most $(N,2)$. 

We now examine the minimal free resolution of $S/I_X$. 
By \cref{conj:mrc}, 
the first Betti numbers can be read from the first negative entries in the rows of $DH_X$, 
and these occurs at positions $(q,2), (q+1,2), (q',1), (q'+1,1)$, and $(N, 0)$ by \cref{lem:submatrix}. 
Since graded Betti numbers in a minimal free resolution must increase in degree due to minimality and by \cref{conj:mrc}, it also follows that $\beta_{k,(i,j)}=0$ for $k>1$ for the $(i,j)$ for which $\beta_{1,(i,j)}\neq 0$. 

Next, by \cref{lem:beta2-plus}, 
the second Betti numbers can be read from the first positive entries occurring after a negative entry along a row of $DH_X$. 
By \cref{lem:submatrix}, these occur in positions $(q',2), (q'+1,2)$, and $(N,1)$, with values which match the statement of the theorem. 
Since graded Betti numbers in a minimal free resolution must increase in degree due to minimality, it also follows that $\beta_{k,(i,j)}=0$ for $k>2$ for the $(i,j)$ for which $\beta_{2,(i,j)}\neq 0$. 

Third Betti numbers are now only possible in degrees $(q'+2,2), (q'+3,2),\dots, (N,2)$. However, minimality of the original complex disallows both $\beta_{3,(i,j)}$ and $\beta_{4,(i,j)}$ to be nonzero. 
Thus, only $\beta_{3,(N,2)}$ can be nonzero and $\beta_{4,(N,2)}=0$, so by \cref{prop:CRgeneral}, $\beta_{3,(N,2)}=3$. 
Having exhausted the possible degrees for nonzero Betti numbers, the proof is now complete. 
\end{proof}

It is worth noting that the virtual of a pair construction does not always yield a short virtual resolution, even when the $\bbd\in\reg(S/I_X)$ used is as small as possible. 
In \cite{booms-peot}, Booms-Peot showed that in $\PP^1 \times \PP^1$, under certain conditions virtual of a pair yields a virtual resolution of length $3$. 
The same issue persists in $\PP^1 \times \PP^2$, as shown below. 

\begin{example}
For $X \subseteq \PP^1 \times \PP^2$ with $|X|=31$ in sufficiently general position, the minimal free resolution has total Betti numbers 
\[
S\gets 
S^{34} \gets
S^{66} \gets 
S^{39} \gets 
S^{6}
\gets 0.
\]
Performing virtual of a pair at $(2,4)\in\reg(S/I_X)$ yields the complex 
\[
\small 
0 \gets S/I_X
\gets S
\gets \begin{gathered}S(-3,-3)^{9} \\[-5pt] \oplus \\[-5pt] S(-2,-4)^{14} \\[-5pt] \oplus \\[-5pt] S(-1,-5)^{11}\end{gathered}
\gets
\begin{gathered}S(-3,-4)^{26} \\[-5pt] \oplus \\[-5pt] S(-2,-5)^{32} \\[-5pt] \oplus \\[-5pt] S(-1,-6)^8\end{gathered}
\gets \begin{gathered}S(-3,-5)^{24} \\[-5pt] \oplus \\[-5pt] S(-2,-6)^{15}\end{gathered}
\gets S(-3,-6)^6
\gets 0.
\normalsize
\qedhere
\]
\end{example}

%%%%%%%%%%%%%%%%%%%%%%%%%%%%%%%%%%%%%%%%%%%%%%%%%%%
\appendix
\section{Short virtual of a pair resolutions for small numbers of sufficiently general points}
\label{sec:cases-2-to-11}
%%%%%%%%%%%%%%%%%%%%%%%%%%%%%%%%%%%%%%%%%%%%%%%%%%%
Listed below are the virtual resolutions obtained from performing virtual of a pair at $(|X|-1,0)$ on the minimal free resolutions of $S/I_X$ when $X \subseteq \PP^1 \times \PP^2$ is a set of sufficiently general points with $2 \le |X| \le 11$. 
The proofs here closely follow that of \cref{thm:virtual-of-pair-stronger} and are thus omitted. The main difference is that for a smaller number of points, the difference matrix $DH_X$ is considerably more crowded. 
\vspace*{-2mm}
\begin{multicols}{2}
\begin{itemize}
\item $|X|=2$:
\tiny
\vspace*{-2mm}
\[
S
\gets \begin{gathered}S(0,-2)\\[-5pt] \oplus \\[-5pt] S(0,-1) \\[-5pt] \oplus \\[-5pt] S(-1,-1)^2 \\[-5pt] \oplus \\[-5pt] S(-2,0)\end{gathered}
\gets \begin{gathered}S(-1,-2)^4\\[-5pt] \oplus \\[-5pt] S(-2,-1)^3\end{gathered}
\gets S(-2,-2)^3.
\]
\vspace*{-3mm}

\normalsize
\item $|X|=3$: 
\tiny
\vspace*{-2mm}
\[
S
\gets \begin{gathered}S(0,-2)^3\\[-5pt] \oplus \\[-5pt] S(-1,-1)^3 \\[-5pt] \oplus \\[-5pt] S(-3,0) \end{gathered}
\gets \begin{gathered}S(-1,-2)^6\\[-5pt] \oplus \\[-5pt] S(-3,-1)^3\end{gathered}
\gets S(-3,-2)^3.
\]
\normalsize

\item $|X|=4$: 
\tiny
\vspace*{-2mm}
\[
S
\gets \begin{gathered}S(0,-2)^2\\[-5pt] \oplus \\[-5pt] S(-1,-1)^2 \\[-5pt] \oplus \\[-5pt] S(-2,-1) \\[-5pt] \oplus \\[-5pt] S(-4,0)\end{gathered}
\gets \begin{gathered}S(-1,-2)^2\\[-5pt] \oplus \\[-5pt] S(-2,-2)^3\\[-5pt] \oplus \\[-5pt] S(-4,-1)^3\end{gathered}
\gets S(-4,-2)^3.
\]
\normalsize

\item $|X|=5$:
 \tiny
\vspace*{-2mm}
\[
S
\gets \begin{gathered}S(0,-2)\\[-5pt] \oplus \\[-5pt] 
S(-1,-2)^2 \\[-5pt] \oplus \\[-5pt] 
S(-1,-1) \\[-5pt] \oplus \\[-5pt] 
S(-2,-1)^2 \\[-5pt] \oplus \\[-5pt] S(-5,0)\end{gathered}
\gets \begin{gathered}S(-2,-2)^6\\[-5pt] \oplus \\[-5pt] S(-5,-1)^3\end{gathered}
\gets S(-5,-2)^3.
\]
\normalsize

\item $|X|=6$:
 \tiny
\vspace*{-2mm}
\[
S
\gets \begin{gathered}S(-1,-2)^6\\[-5pt] \oplus \\[-5pt]
S(-2,-1)^3 \\[-5pt] \oplus \\[-5pt] S(-6,0)\end{gathered}
\gets \begin{gathered}S(-2,-2)^9\\[-5pt] \oplus \\[-5pt] S(-6,-1)^3\end{gathered}
\gets S(-6,-2)^3.
\]
\normalsize

\item $|X|=7$:
 \tiny
\vspace*{-1.5mm}
\[
S
\gets \begin{gathered}S(-1,-2)^5\\[-5pt] \oplus \\[-5pt]
S(-2,-1)^2 \\[-5pt] \oplus \\[-5pt] 
S(-3,-1) \\[-5pt] \oplus \\[-5pt] S(-7,0)\end{gathered}
\gets \begin{gathered}S(-2,-2)^5\\[-5pt] \oplus \\[-5pt] 
S(-3,-2)^3\\[-5pt] \oplus \\[-5pt]
S(-7,-1)^3\end{gathered}
\gets S(-7,-2)^3.
\]
\normalsize

\item $|X|=8$: 
 \tiny
\vspace*{-1mm}
\[
S
\gets \begin{gathered}S(-1,-2)^4\\[-5pt] \oplus \\[-5pt]
S(-2,-1) \\[-5pt] \oplus \\[-5pt] 
S(-3,-1)^2 \\[-5pt] \oplus \\[-5pt] S(-8,0)\end{gathered}
\gets \begin{gathered}S(-2,-2)\\[-5pt] \oplus \\[-5pt] 
S(-3,-2)^6\\[-5pt] \oplus \\[-5pt]
S(-8,-1)^3\end{gathered}
\gets S(-8,-2)^3.
\]
\normalsize

\item $|X|=9$: 
 \tiny
\vspace*{-1mm}
\[
S
\gets \begin{gathered}S(-1,-2)^3\\[-5pt] \oplus \\[-5pt]
S(-2,-2)^3 \\[-5pt] \oplus \\[-5pt] 
S(-3,-1)^3 \\[-5pt] \oplus \\[-5pt] S(-9,0)\end{gathered}
\gets \begin{gathered}
S(-3,-2)^9\\[-5pt] \oplus \\[-5pt]
S(-9,-1)^3\end{gathered}
\gets S(-9,-2)^3.
\]
\normalsize

\item $|X|=10$: 
 \tiny
\vspace*{-1mm}
\[
S
\gets \begin{gathered}S(-1,-2)^2\\[-5pt] \oplus \\[-5pt]
S(-2,-2)^4 \\[-5pt] \oplus \\[-5pt] 
S(-3,-1)^2 \\[-5pt] \oplus \\[-5pt] 
S(-4,-1) \\[-5pt] \oplus \\[-5pt] S(-10,0)\end{gathered}
\gets \begin{gathered}
S(-3,-2)^6\\[-5pt] \oplus \\[-5pt]
S(-4,-2)^3\\[-5pt] \oplus \\[-5pt]
S(-10,-1)^3\end{gathered}
\gets S(-10,-2)^3.
\]
\normalsize

\item $|X|=11$: 
 \tiny
\vspace*{-1mm}
\[
S
\gets \begin{gathered}S(-1,-2)\\[-5pt] \oplus \\[-5pt]
S(-2,-2)^5 \\[-5pt] \oplus \\[-5pt] 
S(-3,-1) \\[-5pt] \oplus \\[-5pt] 
S(-4,-1)^2 \\[-5pt] \oplus \\[-5pt] S(-11,0)\end{gathered}
\gets \begin{gathered}
S(-3,-2)^3\\[-5pt] \oplus \\[-5pt]
S(-4,-2)^6\\[-5pt] \oplus \\[-5pt]
S(-11,-1)^3\end{gathered}
\gets S(-11,-2)^3.
\]
\normalsize
\end{itemize}
\end{multicols}
%%%%%%%%%%%%%%%%%%%%%%%%%%%%%%%%%%%%%%%%%%%%%%%%%%%
%%%%%%%%%%%%%%%%%%%%%%%%%%%%%%%%%%%%%%%%%%%%%%%%%%%
%%%%%%%%%%%%%%%%%%%%%%%%%%%%%%%%%%%%%%%%%%%%%%%%%%%

\begin{bibdiv}
\begin{biblist}

\bib{virtualJSAG}{article}{
   author={Almousa, Ayah},
   author={Bruce, Juliette},
   author={Loper, Michael},
   author={Sayrafi, Mahrud},
   title={The virtual resolutions package for Macaulay2},
   journal={J. Softw. Algebra Geom.},
   volume={10},
   date={2020},
   number={1},
   pages={51--60},
}

\bib{reggie-thesis}{article}{
    author={Anderson, Reginald},
    title={A Resolution of the Diagonal for Toric Deligne--Mumford Stacks}, 
    journal={\textsf{ arXiv:2303.17497 [math.AG]}},
    pages={52 pages},
}

\bib{virtual-original}{article}{
   author={Berkesch, Christine},
   author={Erman, Daniel},
   author={Smith, Gregory G.},
   title={Virtual resolutions for a product of projective spaces},
   journal={Algebr. Geom.},
   volume={7},
   date={2020},
   number={4},
   pages={460--481},
}

\bib{virtualCM}{article}{
   author={Berkesch, Christine},
   author={Klein, Patricia},
   author={Loper, Michael C.},
   author={Yang, Jay},
   title={Homological and combinatorial aspects of virtually Cohen-Macaulay
   sheaves},
   journal={Trans. London Math. Soc.},
   volume={8},
   date={2021},
   number={1},
   pages={413--434},
}

\bib{booms-peot}{article}{
   author={Booms-Peot, Caitlyn},
   title={Hilbert-Burch virtual resolutions for points in $\PP^1 \times \PP^1$},
   journal={arXiv.AC:2304.04953},
   pages={21 pages},
}

\bib{booms-cobb}{article}{
   author={Booms-Peot, Caitlyn},
   author={Cobb, John},
   title={Virtual criterion for generalized Eagon-Northcott complexes},
   journal={J. Pure Appl. Algebra},
   volume={226},
   date={2022},
   number={12},
   pages={Paper No. 107138, 8},
}

\bib{short-HST}{article}{
    author={Brown, Michael K.},
    author={Erman, Daniel},
    title={Results on virtual resolutions for toric varieties}, 
    journal={\textsf{arXiv:2303.14319 [math.AG]}},
    pages={5 pages},
}

\bib{mahrud-resDiag}{article}{
    author={Brown, Michael K.},
    author={Sayrafi, Mahrud}, 
    title={A short resolution of the diagonal for smooth projective toric varieties of Picard rank 2}, 
    journal={\textsf{arXiv:2208.00562 [math.AG]}}, 
    pages={18 pages},
}

\bib{duarte-seceleanu}{article}{
    author = {Duarte, Eliana}, 
    author = {Seceleanu, Alexandra},
     TITLE = {Implicitization of tensor product surfaces via virtual
              projective resolutions},
   JOURNAL = {Math. Comp.},
%  FJOURNAL = {Mathematics of Computation},
    VOLUME = {89},
      YEAR = {2020},
    NUMBER = {326},
     PAGES = {3023--3056},
}

\bib{eisenbud}{book}{
author={Eisenbud, David},
title={Commutative Algebra: With a View Toward Algebraic Geometry},
year={1995},
publisher={Springer New York}
}

\bib{FH}{article}{
    author={Favero, David}, 
    author={Huang, Jesse}, 
    title={Rouquier dimension is Krull dimension for normal toric varieties}, 
    journal={\textsf{arXiv.2302.09158 [math.AG]}}, 
    pages={9 pages},
}

\bib{reu2018}{article}{
   author={Gao, Jiyang},
   author={Li, Yutong},
   author={Loper, Michael C.},
   author={Mattoo, Amal},
   title={Virtual complete intersections in $\PP^1 \times \PP^1$},
   journal={J. Pure Appl. Algebra},
   volume={225},
   date={2021},
   number={1},
   pages={Paper No. 106473, 15},
}

\bib{giuffrida-92}{article}{
    author = {Giuffrida, S.},
    author = {Maggioni, R.}, 
    author = {Ragusa, A.},
    title = {On the postulation of {$0$}-dimensional subschemes on a smooth quadric},
    journal = {Pacific J. Math.},
    volume = {155},
      year = {1992},
    number = {2},
     PAGES = {251--282},
}

\bib{giuffrida-96}{article}{
    author = {Giuffrida, S.},
    author = {Maggioni, R.}, 
    author = {Ragusa, A.},
     TITLE = {Resolutions of generic points lying on a smooth quadric},
   JOURNAL = {Manuscripta Math.},
    VOLUME = {91},
      YEAR = {1996},
    NUMBER = {4},
     PAGES = {421--444},
}

\bib{HHL}{article}{
    author={Hanlon, Andrew}, 
    author={Hicks, Jeff}, 
    author={Lazarev, Oleg}, 
    title={Resolutions of toric subvarieties by line bundles and applications}, 
    journal={\textsf{arxiv:2303.03763 [math.AG]}}, 
    pages={63 pages}, 
}

\bib{HNV-points}{article}{
   author={Harada, Megumi},
   author={Nowroozi, Maryam},
   author={Van Tuyl, Adam},
   title={Virtual resolutions of points in $\PP^1\times\PP^1$},
   journal={J. Pure Appl. Algebra},
   volume={226},
   date={2022},
   number={12},
   pages={Paper No. 107140, 18},
}

\bib{reu2019}{article}{
   author={Kenshur, Nathan},
   author={Lin, Feiyang},
   author={McNally, Sean},
   author={Xu, Zixuan},
   author={Yu, Teresa},
   title={On virtually Cohen-Macaulay simplicial complexes},
   journal={J. Algebra},
   volume={631},
   date={2023},
   pages={120--135},
}

\bib{loper}{article}{
   author={Loper, Michael C.},
   title={What makes a complex a virtual resolution?},
   journal={Trans. Amer. Math. Soc. Ser. B},
   volume={8},
   date={2021},
   pages={885--898},
}

\bib{lorenzini}{article}{
    author = {Lorenzini, Anna},
     title = {The minimal resolution conjecture},
   journal = {J. Algebra},
    volume = {156},
      year = {1993},
    number = {1},
     pages = {5--35},
}

\bib{MS}{article}{
    AUTHOR = {Maclagan, Diane},
    author = {Smith, Gregory G.},
     TITLE = {Multigraded {C}astelnuovo-{M}umford regularity},
   JOURNAL = {J. Reine Angew. Math.},
    VOLUME = {571},
      YEAR = {2004},
     PAGES = {179--212},
}

\bib{M2}{misc}{
    LABEL = {M2}, 
    AUTHOR = {Grayson, Daniel R.},
    AUTHOR = {Stillman, Michael E.},
     TITLE = {Macaulay2, a software system for research in algebraic geometry},
    JOURNAL = {Available at \url{http://www.math.uiuc.edu/Macaulay2/}}
}

\bib{mustata}{incollection}{
    AUTHOR = {Musta\c{t}\v{a}, Mircea},
     TITLE = {Graded {B}etti numbers of general finite subsets of points on
              projective varieties},
      NOTE = {Pragmatic 1997 (Catania)},
   JOURNAL = {Matematiche (Catania)},
    VOLUME = {53},
      YEAR = {1998},
     PAGES = {53--81},
}

\bib{virtual-cellular}{article}{
    author={Van Tuyl, Adam}, 
    author={Yang, Jay}, 
    title={Conditions for Virtually Cohen--Macaulay Simplicial Complexes}, 
    journal={\textsf{arxiv:2311.17806 [math.AC]}}, 
    pages={15 pages}, 
}

\bib{monomial-algebras-book}{book}{
   author={Villarreal, Rafael H.},
   title={Monomial algebras},
   series={Monographs and Research Notes in Mathematics},
   edition={2},
   publisher={CRC Press, Boca Raton, FL},
   date={2015},
   pages={xviii+686},
   }

\bib{yang-monomial}{article}{
   author={Yang, Jay},
   title={Virtual resolutions of monomial ideals on toric varieties},
   journal={Proc. Amer. Math. Soc. Ser. B},
   volume={8},
   date={2021},
   pages={100--111},
}
   
\end{biblist}
\end{bibdiv}

%%%%%%%%%%%%%%%%%%%%%%%%%%%%%%%%%%%%%%%%%%%%%%%%%%%
\end{document}